\title{Simplest miniversal deformations of matrices,  matrix pencils, and contragredient matrix pencils\footnotetext{This is the authors' version of a work that was published in Linear Algebra Appl. 302--303 (1999) 45--61.}}
\author{M$^{\mbox{\b{a}}}$
  Isabel Garc\'\i a-Planas\\
Dept. de Matem\`atica Aplicada
I\\ Universitat Polit\`ecnica de
Catalunya\\ Marqu\'es de
Sentmenat 63-4-3, 08029
Barcelona, Spain\\
igarcia@ma1.upc.es\\ \\ Vladimir
V. Sergeichuk\thanks{Partially
supported by Grant No. UM1-314
of the U.S. Civilian Research
and Development Foundation for
the Independent States of the
Former Soviet Union.} \\
        Institute of Mathematics,
        Tereshchenkivska St. 3 \\
        Kiev, Ukraine \\
        sergeich@imath.kiev.ua}
\date{}
\begin{document}
\maketitle

\begin{abstract}
For a family of linear operators
$A(\vec\lambda): U\to U$ over
$\mathbb C$ that smoothly depend
on parameters $\vec\lambda=
(\lambda_1,\dots,\lambda_k)$, V.
I. Arnold obtained the simplest
normal form of their matrices
relative to a smoothly depending
on $\vec\lambda$ change of a
basis in $U$. We solve the same
problem for a family of linear
operators $A(\vec\lambda): U\to
U$ over $\mathbb R$, for a
family of pairs of linear
mappings $A(\vec\lambda): U\to
V,\ B(\vec\lambda): U\to V$ over
$\mathbb C$ and $\mathbb R$, and
for a family of pairs of counter
linear mappings $A(\vec\lambda):
U\to V,\ B(\vec\lambda): V\to U$
over $\mathbb C$ and $\mathbb
R$.
\end{abstract}

\def\newpic#1{%
   \def\emline##1##2##3##4##5##6{%
      \put(##1,##2){\special{em:point #1##3}}%
      \put(##4,##5){\special{em:point #1##6}}%
      \special{em:line #1##3,#1##6}}}
\newpic{}

\newcommand{\matr}[4]%
{\left[\genfrac{}{}{0pt}{}{#1}{#3}\,
\genfrac{}{}{0pt}{}{#2}{#4}\right]}

\renewcommand{\le}{\leqslant}
\renewcommand{\ge}{\geqslant}

\newtheorem{theorem}{Theorem}[section]
\newtheorem{lemma}{Lemma}[section]
\newtheorem{corollary}{Corollary}[section]

\theoremstyle{remark}
\newtheorem{remark}{Remark}[section]
\newtheorem{example}{Example}[section]

\newcommand{\quiva}{%
\special{em:linewidth 0.4pt}
\unitlength 0.3mm
\linethickness{0.4pt}
\begin{picture}(17.00,15.50)(6,7)
\put(3.00,10.00){\makebox(0,0)[cc]{$\cdot$}}
\emline{5.00}{11.00}{201}{8.10}{12.06}{202}
\emline{8.10}{12.06}{203}{10.74}{12.73}{204}
\emline{10.74}{12.73}{205}{12.92}{13.01}{206}
\emline{12.92}{13.01}{207}{14.63}{12.91}{208}
\emline{14.63}{12.91}{209}{15.88}{12.43}{2010}
\emline{15.88}{12.43}{2011}{16.67}{11.56}{2012}
\emline{16.67}{11.56}{2013}{17.00}{10.00}{2014}
\put(5.00,9.00){\vector(-3,1){0.2}}
\emline{17.00}{10.00}{2015}{16.77}{8.67}{2016}
\emline{16.77}{8.67}{2017}{16.07}{7.72}{2018}
\emline{16.07}{7.72}{2019}{14.92}{7.15}{2020}
\emline{14.92}{7.15}{2021}{13.30}{6.98}{2022}
\emline{13.30}{6.98}{2023}{11.21}{7.18}{2024}
\emline{11.21}{7.18}{2025}{8.67}{7.78}{2026}
\emline{8.67}{7.78}{2027}{5.00}{9.00}{2028}
\end{picture}
}

\newcommand{\quivb}{%
\special{em:linewidth 0.4pt}
\unitlength 0.3mm
\linethickness{0.4pt}
\begin{picture}(23.00,15.00)(6,7)
\put(3.00,10.00){\makebox(0,0)[cc]{$\cdot$}}
\put(23.00,10.00){\makebox(0,0)[cc]{$\cdot$}}
\put(21.00,11.00){\vector(3,-1){0.2}}
\emline{5.00}{11.00}{101}{7.22}{11.96}{102}
\emline{7.22}{11.96}{103}{9.44}{12.60}{104}
\emline{9.44}{12.60}{105}{11.67}{12.94}{106}
\emline{11.67}{12.94}{107}{13.89}{12.98}{108}
\emline{13.89}{12.98}{109}{16.11}{12.70}{1010}
\emline{16.11}{12.70}{1011}{18.33}{12.11}{1012}
\emline{18.33}{12.11}{1013}{21.00}{11.00}{1014}
\put(21.00,9.00){\vector(3,1){0.2}}
\emline{5.00}{9.00}{1015}{7.22}{8.04}{1016}
\emline{7.22}{8.04}{1017}{9.44}{7.40}{1018}
\emline{9.44}{7.40}{1019}{11.67}{7.06}{1020}
\emline{11.67}{7.06}{1021}{13.89}{7.02}{1022}
\emline{13.89}{7.02}{1023}{16.11}{7.30}{1024}
\emline{16.11}{7.30}{1025}{18.33}{7.89}{1026}
\emline{18.33}{7.89}{1027}{21.00}{9.00}{1028}
\end{picture}
}

\newcommand{\quivc}{%
\special{em:linewidth 0.4pt}
\unitlength 0.3mm
\linethickness{0.4pt}
\begin{picture}(23.00,15.00)(6,7)
\put(3.00,10.00){\makebox(0,0)[cc]{$\cdot$}}
\put(23.00,10.00){\makebox(0,0)[cc]{$\cdot$}}
\put(21.00,11.00){\vector(3,-1){0.2}}
\emline{5.00}{11.00}{301}{7.22}{11.96}{302}
\emline{7.22}{11.96}{303}{9.44}{12.60}{304}
\emline{9.44}{12.60}{305}{11.67}{12.94}{306}
\emline{11.67}{12.94}{307}{13.89}{12.98}{308}
\emline{13.89}{12.98}{309}{16.11}{12.70}{3010}
\emline{16.11}{12.70}{3011}{18.33}{12.11}{3012}
\emline{18.33}{12.11}{3013}{21.00}{11.00}{3014}
\put(5.00,9.00){\vector(-3,1){0.2}}
\emline{21.00}{9.00}{3015}{18.78}{8.04}{3016}
\emline{18.78}{8.04}{3017}{16.56}{7.40}{3018}
\emline{16.56}{7.40}{3019}{14.33}{7.06}{3020}
\emline{14.33}{7.06}{3021}{12.11}{7.02}{3022}
\emline{12.11}{7.02}{3023}{9.89}{7.30}{3024}
\emline{9.89}{7.30}{3025}{7.67}{7.89}{3026}
\emline{7.67}{7.89}{3027}{5.00}{9.00}{3028}
\end{picture}
}

\section{Introduction} \label{s1}

All matrices and representations
are considered over a field
$\mathbb F\in\{\mathbb C,
\mathbb R\}$. We base on ideas
and methods from Arnold's
article \cite{arn}, extending
them on quiver representations.

Systems of linear mappings are
conveniently studied if we
consider them as representations
of a quiver. A {\it quiver} is a
directed graph, its {\it
representation $A$} over
$\mathbb F$ is given by
assigning to each vertex $i$ a
finite dimensional vector space
$A_i$ over $\mathbb F$ and to
each arrow $\alpha : i \to j$
a~linear mapping $A_{\alpha}:
A_i \to A_j$. For example, the
problems of classifying
representations of the quivers
$$
\special{em:linewidth 0.4pt}
\unitlength 1.00mm
\linethickness{0.4pt}
\begin{picture}(100.00,15.50)
\put(3.00,10.00){\makebox(0,0)[cc]{$\bullet$}}
\put(38.00,10.00){\makebox(0,0)[cc]{$\bullet$}}
\put(58.00,10.00){\makebox(0,0)[cc]{$\bullet$}}
\put(56.00,11.00){\vector(3,-1){0.2}}
\emline{40.00}{11.00}{1}{42.22}{11.96}{2}
\emline{42.22}{11.96}{3}{44.44}{12.60}{4}
\emline{44.44}{12.60}{5}{46.67}{12.94}{6}
\emline{46.67}{12.94}{7}{48.89}{12.98}{8}
\emline{48.89}{12.98}{9}{51.11}{12.70}{10}
\emline{51.11}{12.70}{11}{53.33}{12.11}{12}
\emline{53.33}{12.11}{13}{56.00}{11.00}{14}
\put(56.00,9.00){\vector(3,1){0.2}}
\emline{40.00}{9.00}{15}{42.22}{8.04}{16}
\emline{42.22}{8.04}{17}{44.44}{7.40}{18}
\emline{44.44}{7.40}{19}{46.67}{7.06}{20}
\emline{46.67}{7.06}{21}{48.89}{7.02}{22}
\emline{48.89}{7.02}{23}{51.11}{7.30}{24}
\emline{51.11}{7.30}{25}{53.33}{7.89}{26}
\emline{53.33}{7.89}{27}{56.00}{9.00}{28}
\emline{5.00}{11.00}{29}{8.10}{12.06}{30}
\emline{8.10}{12.06}{31}{10.74}{12.73}{32}
\emline{10.74}{12.73}{33}{12.92}{13.01}{34}
\emline{12.92}{13.01}{35}{14.63}{12.91}{36}
\emline{14.63}{12.91}{37}{15.88}{12.43}{38}
\emline{15.88}{12.43}{39}{16.67}{11.56}{40}
\emline{16.67}{11.56}{41}{17.00}{10.00}{42}
\put(5.00,9.00){\vector(-3,1){0.2}}
\emline{17.00}{10.00}{43}{16.77}{8.67}{44}
\emline{16.77}{8.67}{45}{16.07}{7.72}{46}
\emline{16.07}{7.72}{47}{14.92}{7.15}{48}
\emline{14.92}{7.15}{49}{13.30}{6.98}{50}
\emline{13.30}{6.98}{51}{11.21}{7.18}{52}
\emline{11.21}{7.18}{53}{8.67}{7.78}{54}
\emline{8.67}{7.78}{55}{5.00}{9.00}{56}
\put(80.00,10.00){\makebox(0,0)[cc]{$\bullet$}}
\put(100.00,10.00){\makebox(0,0)[cc]{$\bullet$}}
\put(98.00,11.00){\vector(3,-1){0.2}}
\emline{82.00}{11.00}{57}{84.22}{11.96}{58}
\emline{84.22}{11.96}{59}{86.44}{12.60}{60}
\emline{86.44}{12.60}{61}{88.67}{12.94}{62}
\emline{88.67}{12.94}{63}{90.89}{12.98}{64}
\emline{90.89}{12.98}{65}{93.11}{12.70}{66}
\emline{93.11}{12.70}{67}{95.33}{12.11}{68}
\emline{95.33}{12.11}{69}{98.00}{11.00}{70}
\put(82.00,9.00){\vector(-3,1){0.2}}
\emline{98.00}{9.00}{71}{95.78}{8.04}{72}
\emline{95.78}{8.04}{73}{93.56}{7.40}{74}
\emline{93.56}{7.40}{75}{91.33}{7.06}{76}
\emline{91.33}{7.06}{77}{89.11}{7.02}{78}
\emline{89.11}{7.02}{79}{86.89}{7.30}{80}
\emline{86.89}{7.30}{81}{84.67}{7.89}{82}
\emline{84.67}{7.89}{83}{82.00}{9.00}{84}
\end{picture}
$$\\[-10mm] are the problems of
classifying, respectively,
linear operators $A:U\to U$ (its
solution is the Jordan normal
form), pairs of linear mappings
$A: U\to V,\ B: U\to V$ (the
matrix pencil problem, solved by
Kronecker), and pairs of counter
linear mappings $A: U\to V,\ B:
V\to U$ (the contagredient
matrix pencil problem, solved in
\cite{pon} and studied in detail
in \cite{horn}).

Studying families of quiver
representations smoothly
depending on parameters, we can
independently reduce each
representation to canonical
form, but then we lose the
smoothness (and even the
continuity) relative to the
parameters. It leads to the
problem of reducing to normal
form by a smoothly depending on
parameters change of bases not
only the matrices of a given
representation, but of an
arbitrary family of
representations close to it.
This normal form is obtained
from the normal form of matrices
of the given representation by
adding to some of their entries
holomorphic functions of the
parameters that are zero for the
zero value of parameters. The
number of these entries must be
minimal to obtain the simplest
normal form.

This problem for representations
of the quiver
 \quiva
over $\mathbb C$ was solved by
Arnold \cite{arn} (see also
\cite[\S\,30]{arn3}). We solve
it for holomorphically depending
on parameters representations of
the quiver
\special{em:linewidth 0.4pt}
\unitlength 0.3mm
\linethickness{0.4pt}
\begin{picture}(17.00,15.50)(6,7)
\put(3.00,10.00){\makebox(0,0)[cc]{$\cdot$}}
\emline{5.00}{11.00}{701}{8.10}{12.06}{702}
\emline{8.10}{12.06}{703}{10.74}{12.73}{704}
\emline{10.74}{12.73}{705}{12.92}{13.01}{706}
\emline{12.92}{13.01}{707}{14.63}{12.91}{708}
\emline{14.63}{12.91}{709}{15.88}{12.43}{7010}
\emline{15.88}{12.43}{7011}{16.67}{11.56}{7012}
\emline{16.67}{11.56}{7013}{17.00}{10.00}{7014}
\put(5.00,9.00){\vector(-3,1){0.2}}
\emline{17.00}{10.00}{7015}{16.77}{8.67}{7016}
\emline{16.77}{8.67}{7017}{16.07}{7.72}{7018}
\emline{16.07}{7.72}{7019}{14.92}{7.15}{7020}
\emline{14.92}{7.15}{7021}{13.30}{6.98}{7022}
\emline{13.30}{6.98}{7023}{11.21}{7.18}{7024}
\emline{11.21}{7.18}{7025}{8.67}{7.78}{7026}
\emline{8.67}{7.78}{7027}{5.00}{9.00}{7028}
\end{picture}
over $\mathbb R$ and
representations of the quivers
\quivb and \quivc \!\!\! both
over $\mathbb C$ and over
$\mathbb R$. In the obtained
simplest normal forms, all the
summands to entries are
independent parameters. A normal
form with the minimal number of
independent parameters, but not
of the summands to entries, was
obtained in \cite{gal} (see also
\cite[\S\,30E]{arn3}) for
representations of the quiver
\special{em:linewidth 0.4pt}
\unitlength 0.3mm
\linethickness{0.4pt}
\begin{picture}(17.00,15.50)(6,7)
\put(3.00,10.00){\makebox(0,0)[cc]{$\cdot$}}
\emline{5.00}{11.00}{801}{8.10}{12.06}{802}
\emline{8.10}{12.06}{803}{10.74}{12.73}{804}
\emline{10.74}{12.73}{805}{12.92}{13.01}{806}
\emline{12.92}{13.01}{807}{14.63}{12.91}{808}
\emline{14.63}{12.91}{809}{15.88}{12.43}{8010}
\emline{15.88}{12.43}{8011}{16.67}{11.56}{8012}
\emline{16.67}{11.56}{8013}{17.00}{10.00}{8014}
\put(5.00,9.00){\vector(-3,1){0.2}}
\emline{17.00}{10.00}{8015}{16.77}{8.67}{8016}
\emline{16.77}{8.67}{8017}{16.07}{7.72}{8018}
\emline{16.07}{7.72}{8019}{14.92}{7.15}{8020}
\emline{14.92}{7.15}{8021}{13.30}{6.98}{8022}
\emline{13.30}{6.98}{8023}{11.21}{7.18}{8024}
\emline{11.21}{7.18}{8025}{8.67}{7.78}{8026}
\emline{8.67}{7.78}{8027}{5.00}{9.00}{8028}
\end{picture}
over $\mathbb R$ and  in
\cite{kag} (partial cases were
considered in
\cite{berg}--\cite{gar})
 for representations of the quiver
\special{em:linewidth 0.4pt}
\unitlength 0.3mm
\linethickness{0.4pt}
\begin{picture}(23.00,15.00)(6,7)
\put(3.00,10.00){\makebox(0,0)[cc]{$\cdot$}}
\put(23.00,10.00){\makebox(0,0)[cc]{$\cdot$}}
\put(21.00,11.00){\vector(3,-1){0.2}}
\emline{5.00}{11.00}{501}{7.22}{11.96}{502}
\emline{7.22}{11.96}{503}{9.44}{12.60}{504}
\emline{9.44}{12.60}{505}{11.67}{12.94}{506}
\emline{11.67}{12.94}{507}{13.89}{12.98}{508}
\emline{13.89}{12.98}{509}{16.11}{12.70}{5010}
\emline{16.11}{12.70}{5011}{18.33}{12.11}{5012}
\emline{18.33}{12.11}{5013}{21.00}{11.00}{5014}
\put(21.00,9.00){\vector(3,1){0.2}}
\emline{5.00}{9.00}{5015}{7.22}{8.04}{5016}
\emline{7.22}{8.04}{5017}{9.44}{7.40}{5018}
\emline{9.44}{7.40}{5019}{11.67}{7.06}{5020}
\emline{11.67}{7.06}{5021}{13.89}{7.02}{5022}
\emline{13.89}{7.02}{5023}{16.11}{7.30}{5024}
\emline{16.11}{7.30}{5025}{18.33}{7.89}{5026}
\emline{18.33}{7.89}{5027}{21.00}{9.00}{5028}
\end{picture}
over $\mathbb C$.


\section{Deformations of quiver representations} \label{s2}

Let $Q$ be a quiver with
vertices $1,\dots,t$. Its {\it
matrix representation} $A$ of
dimension $\vec
n=(n_1,\dots,n_t)\in
\{0,1,2,\dots\}^t$ over $\mathbb
F$ is given by assigning a
matrix $A_{\alpha}\in {\mathbb
F}^{\,n_j\times n_i}$ to each
arrow $\alpha: i\to j$. Denote
by ${\cal R}(\vec n,\mathbb F)$
the vector space of all matrix
representations of dimension
$\vec n$ over $\mathbb F$. An
{\it isomorphism} $S: A\to B$ of
$A,B\in{\cal R}(\vec n,\mathbb
F)$ is given by a sequence
$S=(S_1,\dots,S_t)$ of
non-singular matrices $S_i\in
{\rm Gl}(n_i,\mathbb F)$ such
that $B_{\alpha}=
S_jA_{\alpha}S_i^{-1}$ for each
arrow $\alpha: i\to j$.

By an {\it $\mathbb
F$-deformation} of $A\in{\cal
R}(\vec n,\mathbb F)$ is meant a
parametric matrix representation
${\cal
A}(\lambda_1,\dots,\lambda_k)$
(or for short ${\cal
A}(\vec\lambda)$, where
$\vec\lambda=(\lambda_1,\dots,\lambda_k)$),
whose entries are convergent in
a neighborhood of $\vec 0$ power
series of variables (they are
called {\it parameters})
$\lambda_1,\dots, \lambda_k$
over $\mathbb F$ such that
${\cal A}(\vec 0)=A$.

Two deformations ${\cal
A}(\vec\lambda)$ and ${\cal
B}(\vec\lambda)$ of $A\in{\cal
R}(\vec n,\mathbb F)$ are called
{\it equivalent} if there exists
a {\it deformation} ${\cal
I}(\vec\lambda)$ (its entries
are convergent in a neighborhood
of $\vec 0$ power series and
${\cal I}(\vec 0)=I$) of the
identity isomorphism
$I=(I_{n_1},\dots,I_{n_t}):A\to
A$  such that $$ {\cal
B}_{\alpha}(\vec\lambda)= {\cal
I}_j(\vec\lambda) {\cal
A}_{\alpha}(\vec\lambda) {\cal
I}_i^{-1}(\vec\lambda),\quad
\alpha:i\to j, $$ in a
neighborhood of $\vec 0$.

A deformation ${\cal
A}(\lambda_1,\dots,\lambda_k)$
of $A$ is called {\it versal} if
every deformation ${\cal
B}(\mu_1,\dots,\mu_l)$ of $A$ is
equivalent to a deformation
${\cal
A}(\varphi_1(\vec\mu),\dots,
\varphi_k(\vec\mu)),$ where
$\varphi_i(\vec\mu)$ are
convergent in a neighborhood of
$\vec 0$ power series such that
$\varphi_i(\vec 0)=\vec 0$. A
versal deformation ${\cal
A}(\lambda_1,\dots,\lambda_k)$
of $A$ is called {\it
miniversal} if there is no
versal deformation having less
than $k$ parameters.

For a matrix representation
$A\in{\cal R}(\vec n,\mathbb F)$
and a sequence
$C=(C_1,\dots,C_t)$, $C_i\in
{\mathbb F}^{\,n_i\times n_i}$,
we define the matrix
representation $[C,A]\in{\cal
R}(\vec n,\mathbb F)$ as
follows: $$
[C,A]_{\alpha}=C_jA_{\alpha}-A_{\alpha}C_i,
\quad \alpha:i\to j. $$

A miniversal deformation ${\cal
A}(\lambda_1,\dots, \lambda_k)$
of $A$ will be called {\it
simplest} if it is obtained from
$A$ by adding to certain $k$ of
its entries, respectively,
$\lambda_1$ to the first,
$\lambda_2$ to the
second$,\dots,$ and $\lambda_k$
to the $k$th. The next theorem
is a simple conclusion of a well
known fact.

\begin{theorem} \label{t2.1}
Let ${\cal
A}(\vec\lambda)=A+{\cal B}(\vec
\lambda)$,
$\vec\lambda=(\lambda_1,\dots,
\lambda_k)$, be an $\mathbb
F$-deformation of a matrix
representation $A\in{\cal
R}(\vec n,\mathbb F)$,
$\mathbb{F\in \{C,R\}}$, where
$k$ entries of ${\cal B}(\vec
\lambda)$ are the independent
parameters $\lambda_1,\dots,
\lambda_k$ and the other entries
are zeros. Then ${\cal A}(\vec
\lambda)$ is a simplest
miniversal deformation of $A$ if
and only if $$ {\cal R}(\vec
n,\mathbb F)={\cal P}_{\cal A}
\oplus {\cal T}_A, $$ where
${\cal P}_{\cal A}$ is the
$k$-dimensional vector space of
all ${\cal B}(\vec a)$, $\vec
a\in {\mathbb F}^{\,k}$, and
${\cal T}_A$ is the vector space
of all $[C,A]$, $C \in {\mathbb
F}^{\,n_1\times n_1}\times\dots
\times {\mathbb F}^{\,n_t\times
n_t}$.
\end{theorem}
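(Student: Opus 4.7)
The plan is to reduce the statement to the standard Arnold transversality criterion by making the underlying group action explicit. The group $G := \mathrm{Gl}(n_1,\mathbb F)\times\cdots\times\mathrm{Gl}(n_t,\mathbb F)$ acts analytically on the finite-dimensional vector space $\mathcal{R}(\vec n,\mathbb F)$ via $(S\cdot A)_\alpha = S_j A_\alpha S_i^{-1}$ for each arrow $\alpha\colon i\to j$; two matrix representations are isomorphic iff they lie in one $G$-orbit, so studying deformations of $A$ up to equivalence is the same as studying germs of analytic maps $(\mathbb F^k,\vec 0)\to (\mathcal{R}(\vec n,\mathbb F),A)$ up to $G$-action by analytic curves through the identity.

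First I would compute the tangent space at $A$ to the orbit $G\cdot A$. Writing an analytic curve $\mathcal{I}(\vec\lambda)$ through the identity with $\mathcal{I}_i(\vec\lambda)=I_{n_i}+\lambda C_i+O(\lambda^2)$ and expanding
$$\mathcal{I}_j(\vec\lambda)A_\alpha\mathcal{I}_i(\vec\lambda)^{-1}=A_\alpha+\lambda(C_jA_\alpha-A_\alpha C_i)+O(\lambda^2),$$
identifies $T_A(G\cdot A)$ with exactly $\mathcal{T}_A$.

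Next I would invoke the classical versality criterion in Arnold's form \cite{arn}: a deformation $\mathcal{A}(\vec\lambda)$ of $A$ is versal if and only if its velocity subspace
$$V(\mathcal{A}) := \mathbb F\text{-span}\bigl\{\tfrac{\partial\mathcal{A}}{\partial\lambda_i}(\vec 0) : 1\le i\le k\bigr\}$$
is transversal to the orbit, i.e.\ $\mathcal{R}(\vec n,\mathbb F)=\mathcal{T}_A+V(\mathcal{A})$, and miniversal iff in addition $\dim V(\mathcal{A})$ is minimal, i.e.\ the sum is direct. The standard implicit-function/homotopy argument of \cite[\S\,30]{arn3} uses only that $G$ is a Lie group acting analytically on a linear space, so it transfers verbatim to our quiver setting; I would simply state this and cite it.

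Finally, specialize to $\mathcal{A}=A+\mathcal{B}(\vec\lambda)$ with $\mathcal{B}(\vec\lambda)$ formed by placing the independent parameters $\lambda_1,\dots,\lambda_k$ into $k$ distinct entries. Then $\vec a\mapsto\mathcal{B}(\vec a)$ is linear and injective, so $\mathcal{P}_{\mathcal A}$ coincides with $V(\mathcal{A})$ and has dimension exactly $k$. The general criterion $\mathcal{R}(\vec n,\mathbb F)=\mathcal{T}_A\oplus V(\mathcal{A})$ therefore becomes $\mathcal{R}(\vec n,\mathbb F)=\mathcal{P}_{\mathcal A}\oplus\mathcal{T}_A$, as required. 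The only genuine obstacle is justifying Arnold's transversality theorem in the quiver setting; since the paper explicitly calls this a ``well known fact,'' I would quote it rather than reprove it, noting that its proof uses nothing about the specific quiver \quiva beyond the smoothness of the action.
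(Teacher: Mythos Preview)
Your proposal is correct and follows essentially the same route as the paper: both identify $\mathcal{T}_A$ as the tangent space to the $G$-orbit via the first-order expansion of $S_jA_\alpha S_i^{-1}$, identify $\mathcal{P}_{\mathcal A}$ with the image of the linearization of $\mathcal{A}$, and then invoke Arnold's transversality criterion (cited as a well-known fact) to conclude. Your write-up is slightly more explicit about the biconditional (versal $\Leftrightarrow$ transversal, miniversal $\Leftrightarrow$ direct sum), but the argument is the same.
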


\begin{proof}
Two subspaces of a vector space
$V$ are {\it transversal} if
their sum is equal to $V$. The
class of all isomorphic to
$A\in{\cal R}(\vec n,\mathbb F)$
matrix representations may be
considered as the orbit $A^G$ of
$A$ under the following action
of the group $G={\rm
GL}(n_1,{\mathbb
F})\times\dots\times {\rm
GL}(n_t,{\mathbb F})$ on the
space ${\cal R}(\vec n,\mathbb
F)$: $$ A_{\lambda}^S=S_j
A_{\lambda}S_i^{-1},\quad
\lambda: i\to j, $$ for all
$A\in{\cal R}(\vec n,\mathbb
F)$, $S=(S_1,\dots,S_t) \in G$,
and arrows $\lambda$. A
deformation ${\cal
A}(\vec\lambda)$ of a matrix
representation $A\in{\cal
R}(\vec n,\mathbb F)$ is called
a {\it transversal to the orbit
$A^G$ at the point $A$} if the
space ${\cal R}(\vec n,\mathbb
F)$ is the sum of the space
${\cal A}_*{\mathbb F}^{\,k}$
(that is, of the image of the
linearization ${\cal A}_*$ of
$\cal A(\vec\lambda)$ near $A$;
the linearization means that
only first derivatives matter)
and of the tangent space to the
orbit $A^G$ at the point $A$.
The following fact is well known
(see, for example, \cite[Section
1.6]{arn2} and \cite{arn}): {\it
a transversal (of the minimal
dimension) to the orbit is a
(mini)versal deformation.}

It proves the theorem since
${\cal P}_{\cal A}$ is the space
${\cal A}_*{\mathbb F}^{\,k}$
and ${\cal T}_A$ is the tangent
space to the orbit $A^G$ at the
point $A$; the last follows from
\begin{multline*}
A_{\lambda}^{I+\varepsilon C}=
(I+\varepsilon
C_j)A_{\lambda}(I+\varepsilon
C_i)^{-1}= (I+\varepsilon
C_j)A_{\lambda}(I-\varepsilon
C_i+\varepsilon^2 C_i-\cdots) \\
=A_{\lambda}+ \varepsilon(C_j
A_{\lambda}- A_{\lambda}C_i)
+\varepsilon^2...\, ,
\end{multline*}
for all $C=(C_1,\dots,C_t)$,
$C_i\in {\mathbb F}^{\,n_i\times
n_i}$, small $\varepsilon$, and
arrows $\lambda: i\to j$.
\end{proof}

\begin{corollary} \label{c2.1}
There exists a simplest
miniversal $\mathbb
F$-deformation for every matrix
representation over
$\mathbb{F\in\{C,R\}}$.
\end{corollary}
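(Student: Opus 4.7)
The plan is to deduce Corollary~\ref{c2.1} directly from Theorem~\ref{t2.1} by invoking an elementary fact from linear algebra: any subspace of a finite-dimensional vector space $V$ equipped with a distinguished basis admits a complementary subspace spanned by a subset of that basis. Note that ${\cal R}(\vec n,\mathbb F)$ is finite-dimensional and carries the natural coordinate basis $\{E_{\alpha,pq}\}$, where $E_{\alpha,pq}$ is the matrix representation whose matrix at arrow $\alpha:i\to j$ has a single $1$ at position $(p,q)$ and whose matrices at all other arrows are zero. Given the subspace ${\cal T}_A\subset{\cal R}(\vec n,\mathbb F)$, the claim that some subfamily of the $E_{\alpha,pq}$'s spans a direct complement of ${\cal T}_A$ follows by a standard Gaussian-elimination argument: write a spanning set of ${\cal T}_A$ in coordinates with respect to $\{E_{\alpha,pq}\}$, row-reduce, and let $S$ be the set of indices $(\alpha,p,q)$ which do not appear as pivot columns. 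Then the $E_{\alpha,pq}$ with $(\alpha,p,q)\in S$ form a basis of a subspace ${\cal P}$ with ${\cal R}(\vec n,\mathbb F)={\cal P}\oplus{\cal T}_A$; equivalently, one extends any basis of ${\cal T}_A$ to a basis of the whole space using vectors taken from the fixed basis.

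With $k=\dim{\cal R}(\vec n,\mathbb F)-\dim{\cal T}_A$ and an enumeration $(\alpha_1,p_1,q_1),\dots,(\alpha_k,p_k,q_k)$ of $S$, set
\[
 {\cal A}(\vec\lambda)=A+\lambda_1E_{\alpha_1,p_1q_1}+\dots+\lambda_kE_{\alpha_k,p_kq_k},
\]
so that the entries $\lambda_1,\dots,\lambda_k$ are added to $k$ specific entries of $A$ while all others are unchanged. Then ${\cal P}_{\cal A}={\cal P}$ by construction, so Theorem~\ref{t2.1} immediately yields that ${\cal A}(\vec\lambda)$ is a simplest miniversal deformation of $A$. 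There is no real obstacle in this argument; the entire content of Corollary~\ref{c2.1} is the observation that one can always pick a direct complement of ${\cal T}_A$ from the coordinate basis of ${\cal R}(\vec n,\mathbb F)$, which in turn reduces the substantive work of the paper to \emph{describing} such a complement explicitly for the specific quivers under consideration.
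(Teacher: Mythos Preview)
Your proof is correct and is essentially the same as the paper's: both choose a subset of the coordinate basis $\{E_{\alpha,pq}\}$ that spans a direct complement to ${\cal T}_A$ (the paper phrases this as extending a basis of ${\cal T}_A$ by sifting through the $E_i$'s, you phrase it via pivot/non-pivot columns in a row reduction), and then invoke Theorem~\ref{t2.1}.
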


\begin{proof}
Let $A\in{\cal R}(\vec n,\mathbb
F)$, let $T_1,\dots,T_r$ be a
basis of the space ${\cal T}_A$,
and let $E_1,\dots,E_l$ be the
basis of ${\cal R}(\vec
n,\mathbb F)$ consisting of all
matrix representations of
dimension $\vec n$ such that
each of theirs has one entry
equaling 1 and the others
equaling 0. Removing from the
sequence $T_1,\dots,
T_r,E_1,\dots,E_l$ every
representation that is a linear
combination of the preceding
representations, we obtain a new
basis $T_1,\dots, T_r,
E_{i_1},\dots,E_{i_k}$ of the
space $ {\cal R}(\vec n,\mathbb
F)$. By Theorem \ref{t2.1}, the
deformation $$ {\cal
A}(\lambda_1,\dots, \lambda_k)=
A+\lambda_1
E_{i_1}+\dots+\lambda_kE_{i_k}
$$ is a simplest miniversal
deformation of $A$ since $
E_{i_1},\dots, E_{i_k}$ is a
basis of ${\cal P}_{\cal A}$ and
${\cal R}(\vec n,\mathbb
F)={\cal P}_{\cal A} \oplus
{\cal T}_A$.
\end{proof}

By a set of canonical
representations of a quiver $Q$,
we mean an arbitrary set of
``nice'' matrix representations
such that every class of
isomorphic representations
contains exactly one
representation from it. Clearly,
it suffices to study
deformations of the canonical
representations.

Arnold \cite{arn} obtained a
simplest miniversal deformation
of the Jordan matrices (i.e.,
canonical representations of the
quiver \quiva\!\!\!).
 In the remaining of the article, we obtain simplest miniversal deformations of canonical representations of the quiver
\special{em:linewidth 0.4pt}
\unitlength 0.3mm
\linethickness{0.4pt}
\begin{picture}(17.00,15.50)(6,7)
\put(3.00,10.00){\makebox(0,0)[cc]{$\cdot$}}
\emline{5.00}{11.00}{801}{8.10}{12.06}{802}
\emline{8.10}{12.06}{803}{10.74}{12.73}{804}
\emline{10.74}{12.73}{805}{12.92}{13.01}{806}
\emline{12.92}{13.01}{807}{14.63}{12.91}{808}
\emline{14.63}{12.91}{809}{15.88}{12.43}{8010}
\emline{15.88}{12.43}{8011}{16.67}{11.56}{8012}
\emline{16.67}{11.56}{8013}{17.00}{10.00}{8014}
\put(5.00,9.00){\vector(-3,1){0.2}}
\emline{17.00}{10.00}{8015}{16.77}{8.67}{8016}
\emline{16.77}{8.67}{8017}{16.07}{7.72}{8018}
\emline{16.07}{7.72}{8019}{14.92}{7.15}{8020}
\emline{14.92}{7.15}{8021}{13.30}{6.98}{8022}
\emline{13.30}{6.98}{8023}{11.21}{7.18}{8024}
\emline{11.21}{7.18}{8025}{8.67}{7.78}{8026}
\emline{8.67}{7.78}{8027}{5.00}{9.00}{8028}
\end{picture}
over $\mathbb R$ and of the
quivers \!\quivb\!\!\! and
\mbox{\quivc\!\!\!} both over
$\mathbb C$ and over $\mathbb
R$.

\begin{remark}
Arnold \cite{arn} proposed an
easy method to obtain a
miniversal (but not a simplest
miniversal) deformation of a
matrix under similarity by
solving a certain system of
linear equations. The method is
of considerable current use (see
\cite{kag, berg, gar, gar1}).
Although we do not use it in the
next sections, now we show how
to extend this method to quiver
representations.

The space ${\cal R}(\vec
n,\mathbb F)$ may be considered
as a Euclidean space with scalar
product $$ \langle A,B \rangle =
\sum_{\alpha\in Q_1}{\rm
tr}(A_{\alpha}B_{\alpha}^*), $$
where $Q_1$ is the set of arrows
of $Q$ and $B_{\alpha}^*$ is the
adjoint of $B_{\alpha}$.

Let $A\in{\cal R}(\vec n,\mathbb
F)$ and let $T_1,\dots,T_k$ be a
basis of the orthogonal
complement ${\cal T}_A^{\bot}$
to the tangent space ${\cal
T}_A$. The deformation
\begin{equation}       \label{2.2}
{\cal A}(\lambda_1,\dots,
\lambda_k)=
A+\lambda_1T_1+\dots+\lambda_kT_k
\end{equation}
is a miniversal deformation
(since it is a transversal of
the minimal dimension to the
orbit of $A$) called an {\it
orthogonal miniversal
deformation}.

For every arrow $\alpha:i\to j$,
we denote $b(\alpha):= i$ and
$e(\alpha):= j$. By the proof of
Theorem \ref{t2.1}, $B\in{\cal
T}_A^{\bot}$ if and only if
$\langle B, [C,A]\rangle=0$ for
all $C\in {\mathbb
F}^{\,n_1\times n_1}\times\dots
\times {\mathbb F}^{\,n_t\times
n_t}$. Then
\begin{multline*}
\langle B, [C,A]\rangle=
\sum_{\alpha\in Q_1}{\rm tr}
(B_{\alpha}
(C_{e(\alpha)}A_{\alpha}-
A_{\alpha} C_{b(\alpha)})^*)\\
=\sum_{\alpha\in Q_1}{\rm tr}
(B_{\alpha}
A_{\alpha}^*C_{e(\alpha)}^* -
B_{\alpha} C_{b(\alpha)}^*
A_{\alpha}^*) =\sum_{i=1}^t{\rm
tr}(S_iC_i^*)=0,
\end{multline*}
where $$
S_i:=\sum_{e(\alpha)=i}B_{\alpha}A_{\alpha}^*-
\sum_{b(\alpha)=i} A_{\alpha}^*
B_{\alpha}. $$ Taking $C_i=S_i$
for all vertices $i=1,\dots,t$,
we obtain $S_i=0$.

Therefore, {\it every orthogonal
miniversal deformation of $A$
has the form \eqref{2.2}, where
$T_1,\dots,T_k$ is a fundamental
system of solutions of the
system of homogeneous matrix
equations $$
\sum_{e(\alpha)=i}X_{\alpha}A_{\alpha}^*=
\sum_{b(\alpha)=i} A_{\alpha}^*
X_{\alpha}, \quad i=1,\dots,t,
$$ with unknowns
$T=\{X_{\alpha}\,|\,\alpha\in
Q_1\}$.}
\end{remark}


\section{Deformations of matrices} \label{s5}

In this section, we obtain a
simplest miniversal $\mathbb
R$-deformation of a real matrix
under similarity.

Let us denote
\begin{equation}       \label{5.2''}
J_r^{\mathbb C}(\lambda)=
J_r(\lambda):=\begin{bmatrix}
\lambda&1&&\\&\lambda&\ddots&\\&&\ddots&1
\\ &&&\lambda
\end{bmatrix}, \quad J_r:=J_r(0);
\end{equation}
and, for $\lambda=a+bi\in
\mathbb C$ $(b\ge 0)$, denote
$J_r^{\mathbb R}(\lambda):=
J_r(\lambda)$ if $b=0$ and
\begin{equation}       \label{5.2'}
J_r^{\mathbb R}(\lambda):=
\begin{bmatrix}
T_{ab}&I_2&&\\&T_{ab}&\ddots&\\&&\ddots&I_2\\&&&T_{ab}
\end{bmatrix}\ {\rm if}\ b>0,\ {\rm where}\ T_{ab}:=
\begin{bmatrix}
a&b\\-b&a
\end{bmatrix},
\end{equation}
(the size of $J_r(\lambda),\
J_r^{\mathbb C}(\lambda)$ and $
J_r^{\mathbb R}(\lambda)$ is
$r\times r$).

Clearly, every square matrix
over $\mathbb F\in\{ \mathbb C,
\mathbb R\}$ is similar to a
matrix of the form
\begin{equation}       \label{5.3''}
\oplus_i\Phi^{\mathbb
F}({\lambda_i}),\quad
\lambda_i\ne\lambda_j\ {\rm if}\
i\ne j,
\end{equation}
uniquely determined up to
permutations of summands, where
\begin{equation}       \label{5.3'''}
\Phi^{\mathbb
F}({\lambda_i}):={\rm
diag}(J^{\mathbb
F}_{s_{i1}}({\lambda_i}),\,
J^{\mathbb
F}_{s_{i2}}({\lambda_i}),\dots),\quad
s_{i1}\ge s_{i2}\ge\cdots.
\end{equation}

 Let
\begin{equation}       \label{5.0}
{\cal H}=[H_{ij}]
\end{equation}
 be a parametric block
matrix with $p_i\times q_j$
blocks $H_{ij}$ of the form

\begin{equation}       \label{5.1}
H_{ij}=\left[
\begin{tabular}{cc}
             $*$& \\[-2.5mm]
             $\vdots$&\!\!\!\Large 0\\[-2mm]
             $*$&
\end{tabular}
\right] \  {\rm if}\ p_i\le q_j,
\quad
H_{ij}=\left[\begin{tabular}{c}
                       \Large 0 \\[-1mm]
                                          $\!\!* \cdots *\!\!$
 \end{tabular}\right]
\  {\rm if}\ p_i>q_j,
\end{equation}
where the stars denote
independent parameters.

Arnold \cite{arn} (see also
\cite[\S\,30]{arn3}) proved that
one of the simplest miniversal
$\mathbb C$-deformations of the
matrix \eqref{5.3''} for
$\mathbb {F=C}$ is
$\oplus_i(\Phi^{\mathbb
C}({\lambda_i})+{\cal H}_i)$,
where ${\cal H}_i$ is of the
form \eqref{5.0}. Galin
\cite{gal} (see also
\cite[\S\,30E]{arn3}) showed
that one of the miniversal
$\mathbb R$-deformations of the
matrix \eqref{5.3''} for
$\mathbb {F=R}$ is
$\oplus_i(\Phi^{\mathbb
R}({\lambda_i})+{\cal
H}_{\lambda_i})$, where ${\cal
H}_{\lambda }$ (${\lambda }\in
\mathbb R)$ is of the form
\eqref{5.0} and ${\cal
H}_{\lambda }$ (${\lambda
}\notin \mathbb R)$ is obtained
from a matrix of the form
\eqref{5.0} by the replacement
of its entries $\alpha + \beta
i$ with $2\times 2$ blocks
$T_{\alpha \beta }$ (see
\eqref{5.2'}). For example, a
real $4\times 4$ matrix with two
Jordan $2\times 2$ blocks with
eigenvalues $x\pm iy\ (y\ne 0)$
has a miniversal $\mathbb
R$-deformation \begin{equation}
\label{5.2}
\begin{bmatrix}
x&y&1&0\\-y&x&0&1\\0&0&x&y\\0&0&-y&x
\end{bmatrix}+\begin{bmatrix}
\alpha_1&\beta_1&0&0\\
 -\beta_1&\alpha_1&0&0\\
\alpha_2&\beta_2&0&0\\ -\beta_2&
\alpha_2&0&0\\
\end{bmatrix}
\end{equation}
with the parameters $
\alpha_1,\beta_1,\alpha_2,\beta_2$.
We prove that a simplest
miniversal $\mathbb
R$-deformation of this matrix
may be obtained by the
replacement of the second column
$(\beta_1,\alpha_1,\beta_2,\alpha_2)^T$
in \eqref{5.2} with
$(0,0,0,0)^T$.

\begin{theorem}[Arnold \cite{arn} for $\mathbb F=\mathbb C$]   \label{t5.1}
One of the simplest miniversal
$\mathbb F$-deforma\-tions of
the canonical matrix
\eqref{5.3''} under similarity
over ${\mathbb F}\in\{ \mathbb
C, \mathbb R\}$ is
$\oplus_i(\Phi^{\mathbb
F}({\lambda_i})+{\cal H}_i)$,
where ${\cal H}_i$ is of the
form \eqref{5.0}.
\end{theorem}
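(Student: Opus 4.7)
The plan is to invoke Theorem~\ref{t2.1}: it suffices to check that
\[
{\cal R}(\vec n,\mathbb F) = {\cal P}_{\cal A} \oplus {\cal T}_A,
\]
where $A=\oplus_i\Phi^{\mathbb F}(\lambda_i)$, the space ${\cal P}_{\cal A}$ consists of the matrices $\oplus_i{\cal H}_i$ with arbitrary values in the starred positions of \eqref{5.1}, and ${\cal T}_A=\{CA-AC:C\in\mathbb F^{n\times n}\}$. The strategy is to reduce the claim to a zero-intersection statement via dimension counting, then analyze the commutator map blockwise.

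For the dimension count I would use $\dim_{\mathbb F}{\cal T}_A=n^2-\dim_{\mathbb F} Z_{\mathbb F}(A)$, where $Z_{\mathbb F}(A)$ is the centralizer. The classical formula gives $\dim_{\mathbb C} Z_{\mathbb C}(\Phi^{\mathbb C}(\lambda))=\sum_{j,k}\min(s_j,s_k)$, which is exactly the number of stars in ${\cal H}$; for $\mathbb F=\mathbb R$, a non-real block contributes a real centralizer of real dimension $2\sum_{j,k}\min(s_j,s_k)$, matching the number of real parameters in the corresponding ${\cal H}_i$ (each star being replaced by a $T_{\alpha\beta}$-block with two real entries). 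In either case the task reduces to proving ${\cal P}_{\cal A}\cap{\cal T}_A=0$. Since distinct eigenvalues yield Sylvester equations $J_{s_a}(\lambda_i)X-XJ_{s_b}(\lambda_j)=Y$ with unique solution (as $\lambda_i\ne\lambda_j$), both sides of the decomposition split along eigenvalues, and one fixes a single $\lambda$ and works within $\Phi^{\mathbb C}(\lambda)$. For each pair of Jordan blocks $J_{s_a}(\lambda)$, $J_{s_b}(\lambda)$ a direct induction on the entries of the $s_a\times s_b$ unknown $X$ shows that the image of $X\mapsto XJ_{s_b}(\lambda)-J_{s_a}(\lambda)X$ has a complement consisting of matrices supported on the first column (if $s_a\le s_b$) or the last row (if $s_a>s_b$), which is precisely Arnold's pattern \eqref{5.1}.

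The main obstacle is the real case with $\lambda=a+bi$, $b>0$, where $J_r^{\mathbb R}(\lambda)$ is built from $2\times 2$ blocks $T_{ab}$ rather than from scalars. The trick is that $\mathbb R[T_{ab}]\cong\mathbb C$ as a subring of $\mathbb R^{2\times 2}$ (via $I_2\mapsto 1$, $T_{ab}\mapsto a+bi$); under this identification $J_r^{\mathbb R}(\lambda)$ becomes a complex Jordan block of size $r/2$ acting on $\mathbb C^{r/2}\cong\mathbb R^r$, and the $T_{\alpha\beta}$-star pattern is exactly the $\mathbb C$-linear complement from the preceding paragraph, transported back to $\mathbb R$. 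Transferring the complex argument through this identification yields ${\cal P}_{\cal A}\cap{\cal T}_A=0$ in the real case, completing the proof.
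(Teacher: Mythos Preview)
Your approach via Theorem~\ref{t2.1} and blockwise reduction is sound and matches the paper for $\mathbb F=\mathbb C$ and for real eigenvalues. The gap is in the final paragraph, where you treat the case $\mathbb F=\mathbb R$, $\lambda\notin\mathbb R$.

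You have misread what the theorem asserts there. The claim is that ${\cal H}_i$ is \emph{literally} of the form \eqref{5.0}--\eqref{5.1}: independent \emph{scalar} parameters down the first column (or along the last row) of each block. This is \emph{not} Galin's pattern in which each star is replaced by a $2\times 2$ block $T_{\alpha\beta}$. The paper's point (see the paragraph containing \eqref{5.2}) is precisely that Galin's deformation is miniversal but not \emph{simplest}, because each parameter in $T_{\alpha\beta}$ occupies two matrix entries; Theorem~\ref{t5.1} replaces it by the one-entry-per-parameter pattern \eqref{5.1}. (The dimension count happens to agree for both patterns, so that step survives; what fails is your verification of ${\cal P}_{\cal A}\cap{\cal T}_A=0$.)

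Your $\mathbb C$-linearity trick proves only Galin's weaker statement. Under $\mathbb R^r\cong\mathbb C^{r/2}$ the commutator $X\mapsto XJ_r^{\mathbb R}(\lambda)-J_r^{\mathbb R}(\lambda)X$ on $M_r(\mathbb R)$ respects the splitting into $\mathbb C$-linear and $\mathbb C$-antilinear maps; on the $\mathbb C$-linear part its cokernel is Arnold's complex pattern, which transports back exactly to the $T_{\alpha\beta}$-pattern. But the subspace prescribed by \eqref{5.1} (stars in the first \emph{real} column only) is neither $\mathbb C$-linear nor $\mathbb C$-antilinear, so the identification tells you nothing about whether \emph{it} is a complement to the image. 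A separate argument is genuinely needed; the paper supplies it as the $b>0$ case of Lemma~\ref{l5.1}, a direct strip-by-strip reduction that exploits the $2\times 2$ block structure of $J_r^{\mathbb R}(\lambda)$.
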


 \begin{proof}
Let $A$ be the matrix
\eqref{5.3''}. By Theorem
\ref{t2.1}, we must prove that
for every $M\in {\mathbb
F}^{\,m\times m}$ there exists
$S\in {\mathbb F}^{\,m\times m}$
such that
\begin{equation}       \label{5.3}
M+SA-AS=N,
\end{equation}
 where $N$ is obtained from $\oplus_i{\cal H}_i$ by replacing its stars with elements of ${\mathbb F}$  and is uniquely determined by $M$.
The matrix $A$ is block-diagonal
with diagonal blocks of the form
$J_r^{\mathbb F}(\lambda)$. We
apply the same partition into
blocks to $M$ and $N$ and
rewrite the equality \eqref{5.3}
for blocks:
\begin{equation*}
M_{ij}+S_{ij}A_j-A_iS_{ij}=N_{ij}.
\end{equation*}
The theorem follows from the
next lemma.
\end{proof}

\begin{lemma}  \label{l5.1}
 For given $J_p^{\mathbb F}(\lambda)$, $J_q^{\mathbb F}(\mu)$, and for every matrix $M\in {\mathbb F}^{\,p\times q}$ there exists a
matrix $S\in {\mathbb
F}^{\,p\times q}$ such that
$M+SJ_q^{\mathbb F}(\mu)-
J_p^{\mathbb F}(\lambda)S=0$ if
$\lambda\ne \mu$, and
$M+SJ_q^{\mathbb F}(\mu)-
J_p^{\mathbb F}(\lambda)S=H$ if
$\lambda= \mu$, where $H$ is of
the form \eqref{5.1} with
elements from ${\mathbb F}$
instead of the stars; moreover,
$H$ is uniquely determined by
$M$.
\end{lemma}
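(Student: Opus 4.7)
The plan is to recast the desired identity as a Sylvester-type equation $T(S) = M - H$ with the linear operator $T: S \mapsto J_p^{\mathbb F}(\lambda) S - S J_q^{\mathbb F}(\mu)$, and split on whether $\lambda = \mu$. If $\lambda \ne \mu$, the classical Sylvester theorem yields that $T$ is bijective, since the spectra $\{\lambda\}$ and $\{\mu\}$ of the two Jordan blocks are disjoint; hence $S := T^{-1}(M)$ uniquely solves the equation with $H = 0$.

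For $\lambda = \mu$, I would first reduce to $\lambda = \mu = 0$ by subtracting $\lambda I$ from each block, which leaves $T$ unchanged. In the nilpotent case the operator has the simple entrywise form $T(S)_{ij} = s_{i+1,j} - s_{i,j-1}$, with the conventions $s_{p+1,j} = 0$ and $s_{i,0} = 0$. Assume $p \le q$; the case $p > q$ is symmetric, with the stars lying in the last row rather than the first column. To produce $S$, I would build it row by row from the bottom: the equations $T(S)_{p,j} = m_{p,j}$ for $j \ge 2$ force $s_{p,j-1} = -m_{p,j}$, pinning down the first $q-1$ entries of row $p$; inductively, each row $i < p$ is reconstructed from row $i+1$ via $s_{i,j-1} = s_{i+1,j} - m_{i,j}$ for $j \ge 2$, leaving the last entry $s_{i,q}$ as a free parameter. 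This yields an $S$ realizing the prescribed star form for $H$.

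The delicate part, and the main obstacle I expect, is uniqueness of $H$: I must show that $h_{i,1} = m_{i,1} - s_{i+1,1}$ (and $h_{p,1} = m_{p,1}$) does not depend on the free choices $s_{j,q}$. I would verify this by exploiting that the recursion propagates along the diagonals $\{(r, r+k)\}$ of $S$: the free parameters $s_{j,q}$ all lie on the non-negative diagonals $k = q - j \ge 0$, while each entry $s_{i+1,1}$ sits on the strictly negative diagonal $k = -i$, whose bottom entry $s_{p,p-i}$ is pinned by the bottom-row constraint $s_{p,p-i} = -m_{p,p-i+1}$. Running the recursion bottom-to-top along this diagonal then expresses $s_{i+1,1}$ as a function of $M$ alone, so $H$ is uniquely determined by $M$. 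The real case with complex eigenvalue $\lambda = a + bi$ is handled verbatim after replacing every scalar entry by a $2\times 2$ real block (with $T_{ab}$ and $I_2$ in place of scalars), using that these blocks form a commutative field isomorphic to $\mathbb C$.
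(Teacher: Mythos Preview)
Your treatment of the cases $\lambda\ne\mu$ and $\lambda=\mu$ with scalar Jordan blocks (that is, $\mathbb F=\mathbb C$, or $\mathbb F=\mathbb R$ with $\lambda\in\mathbb R$) is correct and matches the paper's approach: both pivot on the diagonal structure of $S\mapsto SJ_q-J_pS$, yours via an explicit bottom-up recursion along diagonals, the paper via the equivalent observation that the image consists exactly of the matrices whose relevant diagonals sum to zero.

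The gap is in your last sentence. When $\mathbb F=\mathbb R$ and $\lambda=a+bi$ with $b>0$, the $2\times 2$ blocks of $M$ and $S$ are \emph{arbitrary} real $2\times 2$ matrices, not elements of the two-dimensional commutative subalgebra $\{T_{\alpha\beta}\}\cong\mathbb C$; so the scalar argument does not transfer verbatim. Even if you split $M$ and $S$ into their $\mathbb C$-linear and conjugate-$\mathbb C$-linear parts (noting that on the latter the Sylvester operator is bijective because the relevant eigenvalues $\lambda,\bar\lambda$ differ), what you recover is Galin's form: the first \emph{block} column populated by blocks $T_{\alpha_i\beta_i}$, i.e.\ two real columns constrained together. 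That is not the form \eqref{5.1}, which demands a single real column of free entries. Showing that this single-column subspace is also a complement to the image of $T$ is precisely the new content of the lemma in this case, and it does not follow from the complex computation by relabelling. The paper carries this out explicitly: writing the transformation as
\[
M\longmapsto M+b[S'_{ij}]+(\text{block shift of }S\text{ right})-(\text{block shift of }S\text{ up}),
\qquad S'_{ij}=S_{ij}T_{01}-T_{01}S_{ij},
\]
it reduces $M$ strip by strip, using the interplay of the twist term $bS'$ (acting within each $2\times 2$ block) with the shifts (coupling neighbouring blocks) to kill all but the first real column in each horizontal strip, and then checks that the two surviving entries per strip are uniquely determined. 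Your proposal omits this argument.
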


\begin{proof}
If $\lambda\ne\mu$ then
$J_q^{\mathbb F}(\mu)$ and
$J_p^{\mathbb F}(\lambda)$ have
no common eigenvalues, the matrix
$S$ exists by \cite[Sect.
8]{gan}.

Let $\lambda=\mu$ and let
${\mathbb F}={\mathbb C}$ or
$\lambda\in{\mathbb R}$.
 Put $C:=SJ_q^{\mathbb F}(\lambda)- J_p^{\mathbb F}(\lambda)S= SJ_q-J_pS$.
As is easily seen, $C$ is an
arbitrary matrix $[c_{ij}]$ (for
a suitable $S$) satisfying the
condition: if its diagonal
$C_t=\{c_{ij}\,|\,i-j=t\}$
contains both an entry from the
first column and an entry from
the last row, then the sum of
entries of this diagonal is
equal to zero. It proves the
lemma in this case.

Let $\lambda=\mu$, ${\mathbb
F}={\mathbb R}$ and
$\lambda=a+bi$, $b>0$. Then
$p=2m$ and $q=2n$ for certain
$m$ and $n$. We must prove that
every $2m\times 2n$ matrix $M$
can be reduced to a uniquely
determined matrix $H$ of the
form (\ref{5.1}) (with real
numbers instead of the stars) by
transformations
\begin{equation}       \label{5.3'}
M \longmapsto M+SJ_{2n}^{\mathbb
R}(\lambda)- J_{2m}^{\mathbb
R}(\lambda)S,\quad S\in{\mathbb
F}^{\,2m\times 2n}.
\end{equation}
Let us partition $M$ and $S$
into $2\times 2$ blocks $M_{ij}$
and $S_{ij}$, where $1\le i\le
m$ and $1\le j\le n.$ For every
$2\times 2$ matrix $P=[p_{ij}]$,
define (see \eqref{5.2'}) $$
P':=P T_{01}- T_{01}P=
\begin{bmatrix}
-p_{12}-p_{21}& p_{11}-p_{22}\\
p_{11}-p_{22}& p_{12}+p_{21}
\end{bmatrix}.
$$

By \eqref{5.2'}, the
transformation \eqref{5.3'} has
the form $M \mapsto
M+S(T_{ab}\oplus\dots\oplus
T_{ab})-
(T_{ab}\oplus\dots\oplus
T_{ab})S+ SJ_{2n}^2- J_{2m}^2S=
M+b[S(T_{01}\oplus\dots\oplus
T_{01})-
(T_{01}\oplus\dots\oplus
T_{01})S]+ SJ_{2n}^2-
J_{2m}^2S$, that is
\begin{multline} \label{5.4}
M \longmapsto
\begin{bmatrix}
M_{11}& M_{12}&\cdots&M_{1n}\\
M_{21}& M_{22}&\cdots&M_{2n}\\
\hdotsfor{4}\\ M_{m1}&
M_{m2}&\cdots&M_{mn}
\end{bmatrix}
 +b\begin{bmatrix}
S'_{11}&
S'_{21}&\cdots&S'_{1n}\\
S'_{21}&
S'_{22}&\cdots&S'_{2n}\\
\hdotsfor{4}\\ S'_{m1}&
S'_{m2}&\cdots&S'_{mn}
\end{bmatrix}\\
+\begin{bmatrix}
0&S_{11}&\cdots&S_{1,n-1}\\
0&S_{21}&\cdots&S_{2,n-1}\\
\hdotsfor{4}\\
0&S_{m1}&\cdots&S_{m,n-1}
\end{bmatrix}-
\begin{bmatrix}
S_{21}& S_{22}&\cdots&S_{2n}\\
\hdotsfor{4}\\ S_{m1}&
S_{m2}&\cdots&S_{mn}\\
0&0&\cdots&0
\end{bmatrix}.
\end{multline}

Let first $m\le n$. If $m>1$, we
make $M_{mn}=0$ selecting
$S_{mn}'$ and $S_{m,n-1}$. To
preserve it, we must further
take the transformations
\eqref{5.4} with $S$ satisfying
$bS'_{mn}+S_{m,n-1}=0$; that is,
$S'_{mn}=-b^{-1}S_{m,n-1}$ and
$S_{m,n-1}=\matr{-\alpha}{\beta}{\beta}{\alpha}$
with arbitrary $\alpha$ and
$\beta$.

Selecting
$S'_{m,n-1}=\matr{-2\beta}{-2\alpha}{-2\alpha}{2\beta}$
and $S_{m,n-2}$, we make
$M_{m,n-1}=0$. To preserve it,
we must take
$bS'_{m,n-1}+S_{m,n-2}=0$; that
is,
$S'_{m,n-1}=-b^{-1}S_{m,n-2}$
and
$S_{m,n-2}=\matr{-\alpha}{\beta}{\beta}{\alpha}$
with arbitrary $\alpha$ and
$\beta$; and so on until obtain
$M_{m2}=\dots=M_{mn}=0$. To
preserve theirs, we must take
$S_{m1}=\matr{-\alpha}{\beta}{\beta}{\alpha}$
with arbitrary $\alpha$ and
$\beta$ and suitable
$S_{m2},\dots,S_{mn}$. Then
$M_{m1} \mapsto M_{m1}+b
\matr{-2\beta}{-2\alpha}{-2\alpha}{2\beta},$
we make $M_{m1}=
\matr{\gamma}{0}{\delta}{0}$,
where $\gamma$ and $\delta$ are
uniquely determined.

We have reduced the last strip
of $M$ to the form
\begin{equation}       \label{5.5}
[M_{m1}\cdots M_{mn}]=
\begin{bmatrix}
\gamma&0&\cdots&0\\
\delta&0&\cdots&0
\end{bmatrix}.
\end{equation}
To preserve it, we must take
$S_{m1}=\dots= S_{m,n-1}=
S_{mn}'=0$ since the number of
zeros in $M_{m1},\dots,M_{mn}$
is equal to the number of
parameters in $S_{m1},\dots,
S_{m,n-1}, S_{mn}'$.

The next to last strip of $M$
transforms as follows:
$[M_{m-1,1}\cdots
M_{m-1,n}]\mapsto [M_{m-1,1}$
$\cdots M_{m-1,n}]+ b[S'_{
m-1,1}\cdots S'_{ m-1,n}]+
[0\,S_{ m-1,1}\cdots S_{
m-1,n-1}]-[0\cdots 0\,S_{mn}]$.
In the same way, we reduce it to
the form $$ [M_{m-1,1}\cdots
M_{m-1,n}]=
\begin{bmatrix}
\tau&0&\cdots&0\\ \nu&0&\cdots&0
\end{bmatrix}
$$ taking, say, $S_{mn}=0$. We
must prove that $\tau$ and $\nu$
are uniquely determined for all
$S_{mn}$ such that $S'_{mn}=0$.
It may be proved as for the
$\gamma$ and $\delta$ from
\eqref{5.5} since the next to
last horizontal strip of $M$,
without the last block, is
transformed as the last strip:
$
[M_{m-1,1}\cdots M_{m-1,n-1}]
\mapsto [M_{m-1,1}\cdots
M_{m-1,n-1}]+ b[S'_{m-1,1}\cdots
S'_{m-1,n-1}]+
[0\,S_{m-1,1}\cdots
S_{m-1,n-2}]$ (recall that $m\le
n$, so this equality is not
empty for $m>1$).

We repeat this procedure until
reduce $M$ to the form
\eqref{5.1}.

If $m>n$, we reduce $M$ to the
form \eqref{5.1} starting with
the first vertical strip.
\end{proof}


\section{Deformations of matrix pencils} \label{s3}

The canonical form problem for
pairs of matrices
$A,B\in{\mathbb F}^{\,m\times
n}$ under transformations of
simultaneous equivalence $$
(A,B)\mapsto
(SAR^{-1},SBR^{-1}),\quad
S\in{\rm GL}(m,\mathbb F),\ \
R\in{\rm GL}(n,\mathbb F), $$
(that is, for representations of
the quiver \quivb\!\!\!) was
solved by Kronecker: each pair
is uniquely, up to permutation
of summands, reduced to a direct
sum of pairs of the form (see
\eqref{5.2''}--\eqref{5.2'})
\begin{equation}       \label{3.1}
(I,J_r^{\mathbb F}(\lambda)),\
(J_r,I),\ (F_r,K_r),\
(F_r^T,K_r^T),
\end{equation}
where $\lambda=a+bi\in {\mathbb
C}\ (b\ge 0$ if ${\mathbb
F}={\mathbb R}$) and
\begin{equation}       \label{3.1o}
F_r=\begin{bmatrix}
         1&&0\\
         0&\ddots&\\
         &\ddots&1\\
         0&&0
         \end{bmatrix},\quad
K_r=\begin{bmatrix}
         0&&0\\
         1&\ddots&\\
         &\ddots&0\\
         0&&1
         \end{bmatrix}
\end{equation}
are matrices of size  $r\times
(r-1),\ r\times (r-1)$, $r\ge
1$.

A miniversal, but not a simplest
miniversal, deformation of the
canonical pairs of matrices
under simultaneous similarity
was obtained in \cite{kag},
partial cases were considered in
\cite{berg}--\cite{gar}.

     Denote by $ 0^{\uparrow}$ (resp., $0^{\downarrow},\ 0^{\leftarrow},\, 0^{\rightarrow}$) a matrix, in which all entries are zero except for the entries of the first row (resp., the last row, the first column, the last column) that are independent parameters; and denote by $Z$ the $p\times q$ matrix, in which the first $\max\{q-p,0\}$ entries of the first row are independent parameters and the other entries are zeros:
\begin{equation}       \label{3.1a}
0^{\uparrow}=\begin{bmatrix}
         *&\cdots&*\\
         0&\cdots&0\\
         \hdotsfor{3}\\
          0&\cdots&0
         \end{bmatrix},\ \
Z=\left[\begin{tabular}{cccccc}
         $*$ & $\cdots$ & $*$ & 0 & $\cdots$ & 0\\
             &          &     &   & $\ddots$ &  \\
             & \LARGE 0 &     & 0 & $\cdots$ & 0
         \end{tabular}\right].
\end{equation}

\begin{theorem}     \label{t3.1}
Let
\begin{equation}       \label{3.1b}
(A,B)=\bigoplus_{i=1}^l(F_{p_i},
K_{p_i})\oplus (I,C) \oplus
(D,I)\oplus
\bigoplus_{i=1}^r(F_{q_i}^T,
K_{q_i}^T)
\end{equation}
be a canonical pair of matrices
under simultaneous equivalence
over ${\mathbb F}\in \{\mathbb
C, \mathbb R\}$, where $C$ is of
the form \eqref{5.3''},
$D=\Phi^{\mathbb F}(0)$ $($see
\eqref{5.3'''}$)$, and\footnote{We
use a special ordering of
summands in the decomposition
\eqref{3.1b} to obtain $\cal A$
and ${\cal B}$ in the upper
block triangular form except for
blocks in $\tilde C$ and $\tilde
D$.} $p_1\le\dots\le p_l,$
$q_1\ge\dots\ge q_r.$ Then one
of the simplest  miniversal
${\mathbb F}$-deformations of
$(A,B)$ has the form $({\cal
A},{\cal B})=$ $$ {\rm \left(
\special{em:linewidth 0.4pt}
\unitlength 0.60mm
\linethickness{0.4pt}
\begin{picture}(212.00,60.00)(29,50)
\emline{130.00}{60.00}{1}{30.00}{60.00}{2}
\emline{90.00}{100.00}{3}{90.00}{0.00}{4}
\emline{80.00}{100.00}{5}{80.00}{40.00}{6}
\emline{80.00}{40.00}{7}{130.00}{40.00}{8}
\emline{130.00}{50.00}{9}{70.00}{50.00}{10}
\emline{70.00}{50.00}{11}{70.00}{100.00}{12}
\put(35.00,95.00){\makebox(0,0)[cc]{$F_{p_1}$}}
\put(45.00,85.00){\makebox(0,0)[cc]{$F_{p_2}$}}
\put(55.00,75.00){\makebox(0,0)[cc]{$\ddots$}}
\put(65.00,65.00){\makebox(0,0)[cc]{$F_{p_l}$}}
\put(75.00,80.00){\makebox(0,0)[cc]{\rm\Large
0}}
\put(85.00,55.00){\makebox(0,0)[cc]{\rm\Large
0}}
\put(110.00,55.00){\makebox(0,0)[cc]{\rm\Large
0}}
\put(60.00,30.00){\makebox(0,0)[cc]{\rm\Large
0}}
\put(75.00,55.00){\makebox(0,0)[cc]{$I$}}
\put(85.00,45.00){\makebox(0,0)[cc]{$\tilde
D$}}
\put(60.00,90.00){\makebox(0,0)[cc]{\rm\Large
0}}
\put(40.00,70.00){\makebox(0,0)[cc]{\rm\Large
0}}
\put(95.00,35.00){\makebox(0,0)[cc]{$F_{q_1}^T$}}
\put(105.00,25.00){\makebox(0,0)[cc]{$F_{q_2}^T$}}
\put(115.00,15.00){\makebox(0,0)[cc]{$\ddots$}}
\put(125.00,5.00){\makebox(0,0)[cc]{$F_{q_r}^T$}}
\put(120.00,30.00){\makebox(0,0)[cc]{\rm\Large
0}}
\put(100.00,10.00){\makebox(0,0)[cc]{\rm\Large
0}}
\put(85.00,95.00){\makebox(0,0)[cc]{$0^{\downarrow}$}}
\put(85.00,85.00){\makebox(0,0)[cc]{$0^{\downarrow}$}}
\put(85.00,75.00){\makebox(0,0)[cc]{$\vdots$}}
\put(85.00,65.00){\makebox(0,0)[cc]{$0^{\downarrow}$}}
\put(95.00,80.00){\makebox(0,0)[cc]{$0^{\rightarrow}$}}
\put(105.00,80.00){\makebox(0,0)[cc]{$0^{\rightarrow}$}}
\put(115.00,80.00){\makebox(0,0)[cc]{$\cdots$}}
\put(125.00,80.00){\makebox(0,0)[cc]{$0^{\rightarrow}$}}
\put(125.00,45.00){\makebox(0,0)[cc]{$0^{\rightarrow}$}}
\put(115.00,45.00){\makebox(0,0)[cc]{$\cdots$}}
\put(105.00,45.00){\makebox(0,0)[cc]{$0^{\rightarrow}$}}
\put(95.00,45.00){\makebox(0,0)[cc]{$0^{\rightarrow}$}}
\put(135,50){\makebox(0,0)[cc]{,}}
\put(145.00,95.00){\makebox(0,0)[cc]{$K_{p_1}$}}
\put(155.00,85.00){\makebox(0,0)[cc]{$K_{p_2}$}}
\put(165.00,75.00){\makebox(0,0)[cc]{$\ddots$}}
\put(175.00,65.00){\makebox(0,0)[cc]{$K_{p_l}$}}
\put(195.00,55.00){\makebox(0,0)[cc]{\rm\Large
0}}
\put(170.00,30.00){\makebox(0,0)[cc]{\rm\Large
0}}
\put(185.00,55.00){\makebox(0,0)[cc]{$\tilde
C$}}
\put(195.00,45.00){\makebox(0,0)[cc]{$I$}}
\put(150.00,70.00){\makebox(0,0)[cc]{\rm\Large
0}}
\put(205.00,35.00){\makebox(0,0)[cc]{$K_{q_1}^T$}}
\put(215.00,25.00){\makebox(0,0)[cc]{$K_{q_2}^T$}}
\put(225.00,15.00){\makebox(0,0)[cc]{$\ddots$}}
\put(235.00,5.00){\makebox(0,0)[cc]{$K_{q_r}^T$}}
\put(210.00,10.00){\makebox(0,0)[cc]{\rm\Large
0}}
\put(155.00,95.00){\makebox(0,0)[cc]{$Z$}}
\put(165.00,95.00){\makebox(0,0)[cc]{$\cdots$}}
\put(175.00,95.00){\makebox(0,0)[cc]{$Z$}}
\put(175.00,85.00){\makebox(0,0)[cc]{$\vdots$}}
\put(175.00,75.00){\makebox(0,0)[cc]{$Z$}}
\put(165.00,85.00){\makebox(0,0)[cc]{$\ddots$}}
\put(215.00,35.00){\makebox(0,0)[cc]{$Z^T$}}
\put(225.00,35.00){\makebox(0,0)[cc]{$\cdots$}}
\put(235.00,35.00){\makebox(0,0)[cc]{$Z^T$}}
\put(235.00,25.00){\makebox(0,0)[cc]{$\vdots$}}
\put(235.00,15.00){\makebox(0,0)[cc]{$Z^T$}}
\put(225.00,25.00){\makebox(0,0)[cc]{$\ddots$}}
\put(185.00,95.00){\makebox(0,0)[cc]{$0^{\uparrow}$}}
\put(185.00,85.00){\makebox(0,0)[cc]{$0^{\uparrow}$}}
\put(185.00,75.00){\makebox(0,0)[cc]{$\vdots$}}
\put(185.00,65.00){\makebox(0,0)[cc]{$0^{\uparrow}$}}
\put(220.00,95.00){\makebox(0,0)[cc]{$0^{\uparrow}$}}
\put(220.00,85.00){\makebox(0,0)[cc]{$0^{\uparrow}$}}
\put(220.00,75.00){\makebox(0,0)[cc]{$\vdots$}}
\put(220.00,65.00){\makebox(0,0)[cc]{$0^{\uparrow}$}}
\put(205.00,55.00){\makebox(0,0)[cc]{$0^{\leftarrow}$}}
\put(215.00,55.00){\makebox(0,0)[cc]{$0^{\leftarrow}$}}
\put(225.00,55.00){\makebox(0,0)[cc]{$\cdots$}}
\put(235.00,55.00){\makebox(0,0)[cc]{$0^{\leftarrow}$}}
\put(195.00,80.00){\makebox(0,0)[cc]{\rm\Large
0}}
\put(220.00,45.00){\makebox(0,0)[cc]{\rm\Large
0}}
\emline{145.00}{0.00}{13}{140.00}{0.00}{14}
\emline{140.00}{0.00}{15}{140.00}{100.00}{16}
\emline{140.00}{100.00}{17}{145.00}{100.00}{18}
\emline{235.00}{100.00}{19}{240.00}{100.00}{20}
\emline{240.00}{100.00}{21}{240.00}{0.00}{22}
\emline{240.00}{0.00}{23}{235.00}{0.00}{24}
\emline{240.00}{60.00}{25}{140.00}{60.00}{26}
\emline{200.00}{100.00}{27}{200.00}{0.00}{28}
\emline{240.00}{40.00}{29}{190.00}{40.00}{30}
\emline{190.00}{40.00}{31}{190.00}{100.00}{32}
\emline{180.00}{100.00}{33}{180.00}{50.00}{34}
\emline{180.00}{50.00}{35}{240.00}{50.00}{36}
\emline{125.00}{0.00}{37}{130.00}{0.00}{38}
\emline{130.00}{0.00}{39}{130.00}{100.00}{40}
\emline{130.00}{100.00}{41}{125.00}{100.00}{42}
\emline{35.00}{100.00}{43}{30.00}{100.00}{44}
\emline{30.00}{100.00}{45}{30.00}{0.00}{46}
\emline{30.00}{0.00}{47}{35.00}{0.00}{48}
\end{picture}
\right),} $$

\noindent where $\tilde C$ and
$\tilde D$ are simplest
miniversal $\mathbb
F$-deformations of $C$ and $D$
under similarity (for instance,
given by Theorem \ref{t5.1}).
\end{theorem}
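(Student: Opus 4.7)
By Theorem~\ref{t2.1}, the task reduces to establishing the direct sum decomposition $\mathcal{R}(\vec n,\mathbb F) = \mathcal{P}_{(\mathcal{A},\mathcal{B})} \oplus \mathcal{T}_{(A,B)}$, where $\mathcal{P}_{(\mathcal{A},\mathcal{B})}$ is the subspace of matrix pairs supported on the prescribed parametric positions and $\mathcal{T}_{(A,B)} = \{(SA - AR,\, SB - BR) : S\in\mathbb F^{m\times m},\ R\in\mathbb F^{n\times n}\}$. Equivalently, every pair $(M,N)$ must admit a \emph{unique} decomposition $(M,N) = (M_0,N_0) + (SA - AR,\, SB - BR)$ with $(M_0,N_0)$ of the prescribed form.

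The key observation is that because $A$ and $B$ are block-diagonal with respect to the summands of \eqref{3.1b}, the equation above decouples. Partitioning $(M,N)$ and $(S,R)$ conformally into blocks $M_{ij},N_{ij},S_{ij},R_{ij}$, the system becomes, for each ordered pair $(i,j)$,
\begin{equation*}
M_{ij} = M_{0,ij} + S_{ij}A_j - A_iR_{ij}, \qquad N_{ij} = N_{0,ij} + S_{ij}B_j - B_iR_{ij},
\end{equation*}
where $(A_i,B_i),(A_j,B_j)$ range over the four canonical types in \eqref{3.1}. Thus it suffices to prove a local claim for each ordered pair of types: the map $(S_{ij},R_{ij}) \mapsto (S_{ij}A_j - A_iR_{ij},\,S_{ij}B_j - B_iR_{ij})$, combined with the inclusion of the prescribed pattern at position $(i,j)$, is a bijection onto the full space of matrix pairs of the relevant block size.

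I would then dispatch these local claims in four regimes. (a)~Diagonal regular blocks (type $(I,J)$ with $(I,J)$, and $(J,I)$ with $(J,I)$): elimination of the unknown on the identity side reduces to the matrix equation treated in Lemma~\ref{l5.1}, so Theorem~\ref{t5.1} yields the deformations $\tilde C$ and $\tilde D$. (b)~Off-diagonal regular blocks with distinct eigenvalues: the same Sylvester-type argument (now with invertible difference of eigenvalues) produces a zero cokernel. (c)~Regular-singular crossings (one regular summand, one singular summand $(F_p,K_p)$ or $(F_q^T,K_q^T)$): solving the two equations recursively along the shift structure of $F$ and $K$ leaves exactly one row or column of residual parameters, giving the patterns $0^{\uparrow},0^{\downarrow},0^{\leftarrow},0^{\rightarrow}$. (d)~Singular-singular crossings (within the $F$-group, within the $F^T$-group, and between them): the ordering $p_1\le\cdots\le p_l$, $q_1\ge\cdots\ge q_r$ flagged in the footnote forces all residuals into upper-triangular block positions, where they assume the $Z$ and $Z^T$ patterns.

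The main obstacle is regime~(d), especially the cross case between $(F_{p_i},K_{p_i})$ and $(F_{q_j}^T,K_{q_j}^T)$, which requires simultaneously solving two matrix equations for rectangular unknowns of differing shapes. Here I would exploit the shift identities $F_r^T F_r = K_r^T K_r = I_{r-1}$ together with the nilpotent action of $K_r^T F_r$ to reduce the pair of equations to a single recursion along shift-diagonals; the unique obstruction to solvability should then be a row of $\max\{q_j - p_i,0\}$ free entries, exactly matching the $Z$-pattern in \eqref{3.1a}. A final parameter count---comparing the number of stars in the prescribed pattern against the codimension of the orbit of $(A,B)$ under the $\mathrm{GL}(m)\times\mathrm{GL}(n)$ action, which is known from the Kronecker invariants---provides a self-consistency check that no case has been over- or under-counted.
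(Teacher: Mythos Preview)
Your overall architecture is exactly that of the paper: invoke Theorem~\ref{t2.1}, use the block-diagonal structure of $(A,B)$ to decouple the problem into independent block equations
\[
(M_{ij},N_{ij})\;\longmapsto\;(M_{ij}+S_{ij}A_j-A_iR_{ij},\,N_{ij}+S_{ij}B_j-B_iR_{ij}),
\]
and then run a case analysis over ordered pairs of Kronecker summands. Regimes~(a)--(c) are correctly identified and match the paper's Cases~5--9 and~2,~3,~7.

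The genuine gap is in regime~(d), specifically your treatment of the cross case ${\cal P}_i=(F_p,K_p)$, ${\cal P}_j=(F_q^T,K_q^T)$. You predict a single $Z$-type residual of $\max\{q-p,0\}$ parameters, but this is wrong: the block $M_{ij}$ reduces only to $0^{\rightarrow}$ (a full last column, $p$ parameters) and, \emph{independently}, $N_{ij}$ reduces to $0^{\uparrow}$ (a full first row, $q$ parameters), giving $p+q$ parameters in total. Concretely, $\bigtriangleup M_{ij}=S_{\rhd}-R_{\triangledown}$ can kill everything in $M_{ij}$ except its last column; the remaining freedom in $(S,R)$ then acts on $N_{ij}$ by $\bigtriangleup N_{ij}=(X_{\triangledown})_{\lhd}-[X\,\vdots\,]_{\vartriangle}$, whose image consists of matrices whose anti-diagonal sums through the first row vanish, leaving exactly the first row as obstruction. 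This is the paper's Case~4, and it is precisely what the theorem's block picture shows: the $(F,K)$--$(F^T,K^T)$ positions carry $0^{\rightarrow}$ in ${\cal A}$ and $0^{\uparrow}$ in ${\cal B}$, not $Z$. Your proposed ``shift-diagonal recursion'' cannot collapse both matrices to a single $Z$, because the two block equations are coupled only through the constraint $S=R^{\prec}_{\triangledown}$ (up to a free column), and that constraint does not produce enough cancellation. Your closing parameter-count check would in fact detect this discrepancy rather than confirm your answer.
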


Let us denote by $S^{\succ}$
(resp., $S^{\prec},\
S^{\curlyvee},\
S^{\curlywedge}$) the matrix
that is obtained from a matrix
$S$ by removing of its first
column (resp., last column,
first row, last row), and denote
by $S_{\rhd}$ (resp.,
$S_{\lhd},$ $S_{\triangledown}$,
$S_{\vartriangle}$) the matrix
that is obtained from a matrix
$S$ by connecting of the zero
column to the right (resp.,
zero column to the left, zero
row at the bottom, zero row at
the top).

The following equalities hold
for every $p\times q$ matrix
$S$:
\begin{align*}
SF_q& =S^{\prec} & SK_q&
=S^{\succ} & SF_{q+1}^T&
=S_{\rhd} & SK_{q+1}^T&=
S_{\lhd} & SJ_q&
=S_{\lhd}^{\prec} \\ F_{p+1}S&
=S_{\triangledown} & K_{p+1}S&
=S_{\vartriangle} & F_p^TS&
=S^{\curlywedge} & K^T_pS&
=S^{\curlyvee} & J_pS&
=S^{\curlyvee}_{\triangledown}
\end{align*}

\begin{proof}[Proof of Theorem \ref{t3.1}.]
By Theorem \ref{t2.1}, we must
prove that for every $M,N\in
{\mathbb F}^{\,m\times n}$ there
exist $S\in {\mathbb
F}^{\,m\times m}$ and $R\in
{\mathbb F}^{\,n\times n}$ such
that
\begin{equation}       \label{3.2}
(M,N)+(SA-AR,\,SB-BR)=(P,Q),
\end{equation}
 where $(P,Q)$ is obtained from $({\cal A},{\cal B})-(A,B)$ by replacing the stars with elements of ${\mathbb F}$ and is uniquely determined by $(M,N)$.
The matrices $A$ and $B$ have
the block-diagonal form:
$A=A_1\oplus A_2\oplus \cdots,$
$B=B_1\oplus B_2 \oplus\cdots,$
where ${\cal P}_i=(A_i,\,B_i)$
are direct summands of the form
\eqref{3.1}. We apply the same
partition into blocks to $M$ and
$N$ and rewrite the equality
\eqref{3.2} for blocks:
\begin{equation*}
(M_{ij},N_{ij})+(S_{ij}A_j-A_iR_{ij},\,
S_{ij}B_j-B_iR_{ij})=(P_{ij},Q_{ij}),
\end{equation*}

Therefore, for every pair of
summands ${\cal P}_i
=(A_i,\,B_i)$ and ${\cal
P}_j=(A_j,\,B_j)$, $i\le j$, we
must prove that

(a) the pair $(M_{ij},N_{ij})$
can be reduced to the pair
$(P_{ij},Q_{ij})$ by
transformations $(M_{ij},N_{ij})
\mapsto
(M_{ij},N_{ij})+(\bigtriangleup
M_{ij},\bigtriangleup N_{ij})$,
where $$ \bigtriangleup M_{ij}:=
SA_j-A_iR,\quad \bigtriangleup
N_{ij}:=SB_j-B_iR $$
 with arbitrary $R$ and $S$; moreover, $(P_{ij},Q_{ij})$ is uniquely determined (more exactly, its entries on the places of stars are uniquely determined) by $(M_{ij},N_{ij})$; and, if $i<j$,

(b) the pair $(M_{ji},N_{ji})$
can be reduced to the pair
$(P_{ji},Q_{ji})$ by
transformations $(M_{ji},N_{ji})
\mapsto
(M_{ji},N_{ji})+(\bigtriangleup
M_{ji},\bigtriangleup N_{ji})$,
where $$ \bigtriangleup M_{ji}:=
SA_i-A_jR, \quad \bigtriangleup
N_{ji}:=SB_i-B_jR $$
 with arbitrary $R$ and $S$; moreover, $(P_{ji},Q_{ji})$ is uniquely determined by $(M_{ji},N_{ji})$.

\begin{description}
\item[\it Case 1: ${\cal P}_i=(F_p,K_p)$ and ${\cal
P}_j=(F_q,K_q)$, $p\le q$.]
${}$\nopagebreak

$\qquad$ (a) We have
$\bigtriangleup M_{ij}=
SF_q-F_pR=S^{\prec}-R_{\triangledown}$.
Adding $\bigtriangleup M_{ij}$,
we make $M_{ij}=0$; to preserve
it, we must further take $S$ and
$R$ for which $\bigtriangleup
M_{ij}=0$, i.e.
$S=[R_{\triangledown}\,\vdots\,]$,
where the points denote an
arbitrary column. Further,
$\bigtriangleup N_{ij}=SK_q-K_pR
=S^{\succ}-R_{\vartriangle}
=[R_{\triangledown}\,\vdots\,]^{\succ}-
R_{\vartriangle}
=[X_{\triangledown}\,\vdots\,]
-[\,\vdots\, X]_{\vartriangle}$,
where $X:=R^{\succ}$. Clearly,
$\bigtriangleup N_{ij}$ is an
arbitrary matrix
$[\delta_{\alpha\beta}]$ that
satisfies the condition: if its
diagonal
$D_t=\{\delta_{\alpha\beta}\,|\,\alpha-\beta
=t\}$ contains an entry from the
first row and does not contain
an entry from the last column,
then the sum of entries of this
diagonal is equal to zero.
Adding $\bigtriangleup N_{ij}$,
we make $N_{ij}=Z$, where $Z$ is
of the form \eqref{3.1a} but
with elements of ${\mathbb F}$
instead of the stars. If $i=j$,
then $p=q$, $N_{ii}=Z$ has size
$p\times (p-1)$, so $N_{ii}=0$
(see \eqref{3.1a}).

$\qquad$ (b) We have
$\bigtriangleup M_{ji}=
SF_p-F_qR$ and $\bigtriangleup
N_{ji}= SK_p-K_qR$; so we
analogously make $M_{ji}=0$ and
$N_{ji}=Z$. But since $Z$ has
size $q\times (p-1)$ and $p\le
q$, $N_{ji}=Z=0$ (see
\eqref{3.1a}).

\item[\it Case 2: ${\cal P}_i=(F_p,K_p)$ and ${\cal
P}_j=(I,J^{\mathbb
F}_q(\lambda))$.] ${}$

$\qquad$ (a) We have
$\bigtriangleup M_{ij}=
S-F_pR=S-R_{\triangledown}$.
Make $M_{ij}=0$; to preserve it,
we must further take
$S=R_{\triangledown}$. Then
$\bigtriangleup
N_{ij}=SJ_q^{\mathbb
F}(\lambda)-K_pR = (R
J_q^{\mathbb F}(\lambda))
_{\triangledown}-R_{\vartriangle}$.
Using the last row of $R$, we
make the last row of $N_{ij}$
equaling zero, then the next to
the last row equaling zero, and
so on util reduce $N_{ij}$ to
the form $ 0^{\uparrow}$ (with
elements of ${\mathbb F}$
instead of the stars).

$\qquad$ (b) We have
$\bigtriangleup M_{ji}=
SF_p-R=S^{\prec}-R$. Make
$\bigtriangleup M_{ji}=0$, then
$R= S^{\prec}$; $\bigtriangleup
N_{ji}= SK_p-J_q^{\mathbb
F}(\lambda)R=
S^{\succ}-(J_q^{\mathbb
F}(\lambda)S)^{\prec}$. We make
$N_{ji}=0$ starting with the
last row (with the last
horizontal strip if ${\mathbb
F}={\mathbb R}$ and
$\lambda\notin {\mathbb R}$).

\item[\it Case 3: ${\cal P}_i=(F_p,K_p)$ and ${\cal
P}_j=(J_q,I)$.] ${}$

$\qquad$ (a) We have
$\bigtriangleup N_{ij}= S-K_pR$,
make $N_{ij}=0$, then $S= K_pR=
R_{\vartriangle}$;
$\bigtriangleup M_{ij}=
SJ_q-F_pR =
(RJ_q)_{\vartriangle}-R_{\triangledown}.$
Reduce $M_{ij}$ to the form
$0^{\downarrow}$ starting with
the first row.

$\qquad$ (b) We have
$\bigtriangleup N_{ji}= SK_p-R$,
make $\bigtriangleup N_{ji}=0$,
then $R= SK_p=S^{\succ}$;
$\bigtriangleup M_{ji}=
SF_p-J_qR=S^{\prec}-
(J_qS)^{\succ}$. We make
$M_{ji}=0$ starting with the
last row.

\item[\it Case 4: ${\cal P}_i=(F_p,K_p)$ and ${\cal
P}_j=(F_q^T,K_q^T)$.] ${}$

$\qquad$ (a) We have
$\bigtriangleup M_{ij}=
SF_q^T-F_pR=S_{\rhd}-R_{\triangledown}$.
Reduce $M_{ij}$ to the form
$0^{\rightarrow}$, then
$(S_{\rhd}-R_{\triangledown})^{\prec}=
S-R^{\prec}_{\triangledown}=0$,
$S= R^{\prec}_{\triangledown}$.
Put $X:=R^{\prec}$, then
$S=X_{\triangledown}$ and
$R=[X\,\vdots\,]$, where the
points denote an arbitrary row.
Further, $\bigtriangleup
N_{ij}=SK_q^T-K_pR =
S_{\lhd}-R_{\vartriangle}=
(X_{\triangledown})_{\lhd}-[X\,\vdots\,]_{\vartriangle}$.
Clearly, $\bigtriangleup N_{ij}$
is an arbitrary matrix
$[\delta_{\alpha\beta}]$ that
satisfies the condition: if its
secondary diagonal
$D_t=\{\delta_{\alpha\beta}\,|\,\alpha+\beta
=t\}$ contains an entry from the
first row, then the sum of
entries of this diagonal is
equal to zero. Adding
$\bigtriangleup N_{ij}$, we
reduce $N_{ij}$ to the form $
0^{\uparrow}$.

$\qquad$ (b) We have
$\bigtriangleup M_{ji}=
SF_p-F_q^TR=S^{\prec}-R^{\curlywedge}$.
Make $M_{ji}=0$, then
$S=[R^{\curlywedge}\,\vdots\,]$.
Further, $\bigtriangleup N_{ji}=
SK_p-K_q^TR=S^{\succ}-
R^{\curlyvee}=[R^{\curlywedge}\,\vdots\,]^{\succ}-
R^{\curlyvee}$, make $N_{ji}=0$
starting with the last column.

\item[\it Case 5: ${\cal P}_i=(I, J_p^{\mathbb F}(\lambda))$ and ${\cal P}_j=(I,J_q^{\mathbb F} (\mu))$.]
${}$

$\qquad$ (a) We have
$\bigtriangleup M_{ij}= S- R$.
Make $M_{ij}=0$, then $S=R$;
$\bigtriangleup
N_{ij}=SJ_q^{\mathbb
F}(\mu)-J_p^{\mathbb F}
(\lambda)R$. Using Lemma
\ref{l5.1}, we make $N_{ij}=0$
if $\lambda\ne \mu$ and
$N_{ij}=H$ if $\lambda= \mu$.

$\qquad$ (b) We have
$\bigtriangleup M_{ji}= S- R$
and $\bigtriangleup
N_{ji}=SJ_p^{\mathbb
F}(\lambda)-J_q^{\mathbb
F}(\mu)R$. As in Case 5(a), make
$M_{ji}=0$, $N_{ji}=0$ if
$\lambda\ne \mu$ and $N_{ji}=H$
if $\lambda= \mu$.

\item[\it Case 6: ${\cal P}_i=(I, J_p^{\mathbb F} (\lambda))$ and ${\cal P}_j=(J_q,I)$.]
${}$

$\qquad$ (a) We have
$\bigtriangleup M_{ij}= SJ_q-
R=S^{\prec}_{\lhd}-R$. Make
$M_{ij}=0$, then $R=
S^{\prec}_{\lhd}$;
$\bigtriangleup
N_{ij}=S-J_p^{\mathbb
F}(\lambda)R =S-( J_p^{\mathbb
F}(\lambda)S)^{\prec}_{\lhd}$.
We make $N_{ij}=0$ starting with
the first column.

$\qquad$ (b) We have
$\bigtriangleup M_{ji}= S-J_q
R$, make $M_{ji}=0$, then
$S=R^{\curlyvee}_{\triangledown}$;
$\bigtriangleup
N_{ji}=SJ_p^{\mathbb F}
(\lambda)-R=(RJ_p^{\mathbb F}
(\lambda))^{\curlyvee}_{\triangledown}-R$.
We make $N_{ji}=0$ starting with
the last row.

\item[\it Case 7: ${\cal P}_i=(I, J_p^{\mathbb F} (\lambda))$ and ${\cal P}_j=(F_q^T,K_q^T)$.]
${}$

$\qquad$ (a) We have
$\bigtriangleup M_{ij}= SF_q^T-
R$. Make $M_{ij}=0$, then
$R=S_{\rhd}$; $\bigtriangleup
N_{ij}=SK_q^T-J_p^{\mathbb F}
(\lambda)R
=S_{\lhd}-(J_p^{\mathbb F}
(\lambda)S)_{\rhd}$. We reduce
$N_{ij}$ to the form
$0^{\leftarrow}$ starting with
the last row (with the last
horizontal strip if ${\mathbb
F}={\mathbb R}$ and
$\lambda\notin {\mathbb R}$).

$\qquad$ (b) We have
$\bigtriangleup M_{ji}= S-F_q^T
R$, make $M_{ji}=0$, then
$S=R^{\curlywedge}$,
$\bigtriangleup
N_{ji}=SJ_p^{\mathbb
F}(\lambda)-K^T_qR=
(RJ_p^{\mathbb
F}(\lambda))^{\curlywedge}-R^{\curlyvee}$.
We make $N_{ji}=0$ starting with
the first column (with the first
vertical strip if ${\mathbb
F}={\mathbb R}$ and
$\lambda\notin {\mathbb R}$).

\item[\it Case 8: ${\cal P}_i=(J_p,I)$ and ${\cal P}_j=(J_q,I)$.] ${}$ Interchanging the matrices in each pair, we reduce this case to Case 5.

\item[\it Case 9: ${\cal P}_i=(J_p,I)$ and ${\cal P}_j=(F_q^T,K_q^T)$.]
${}$

$\qquad$ (a) We have
$\bigtriangleup N_{ij}= SK_q^T-
R$. Make $N_{ij}=0$, then $R=
S_{\lhd}$; $\bigtriangleup
M_{ij}=SF^T_q-J_pR =S_{\rhd}-
(J_pS)_{\lhd}$. We reduce
$M_{ij}$ to the form
$0^{\rightarrow}$ starting with
the first column.

$\qquad$ (b) We have
$\bigtriangleup N_{ji}= S-K_q^T
R$, make $N_{ji}=0$, then
$S=R^{\curlyvee}$,
$\bigtriangleup
M_{ji}=SJ_p-F^T_qR=(RJ_p)^{\curlyvee}-R^{\curlywedge}$.
We make $M_{ji}=0$ starting with
the first column.

\item[\it Case 10: ${\cal P}_i=(F_p^T,K_p^T)$ and ${\cal P}_j=(F_q^T,K_q^T),\ p\ge q$.]
${}$

$\qquad$ (a) We have
$\bigtriangleup M_{ij}= SF_q^T-
F_p^TR$ and $\bigtriangleup
N_{ij}= SK_q^T- K_p^TR$, so
$(\bigtriangleup M_{ij})^T=
(-R^T)F_p- F_q(-S^T)$ and
$(\bigtriangleup N_{ij})^T=
(-R^T)K_p- K_q(-S^T)$. Reasoning
as in Case 1(a), we make
$M_{ij}^T=0$ and $N_{ij}^T=Z$,
that is $M_{ij} =0$ and
$N_{ij}=Z^T$ ($N_{ij}=0$ if
$i=j$).

$\qquad$ (b) We have
$\bigtriangleup M_{ji}=
SF_p^T-F_q^T R$ and
$\bigtriangleup N_{ji}=
SK_p^T-K_q^T R$, so we
analogously make $M_{ji}=0$ and
$N_{ji}=Z^T$. Since the size of
$Z^T$ is $(q-1)\times p$ and
$p\ge q$, by \eqref{3.1a} we
have $Z^T=0$.
\end{description}
\end{proof}

\section{Deformations of contragredient matrix pencils} \label{s4}

The canonical form problem for
pairs of matrices $A\in{\mathbb
F}^{\,m\times n},\ B\in{\mathbb
F}^{\,n\times m}$ under
transformations of
contragredient equivalence $$
(A,B)\mapsto
(SAR^{-1},RBS^{-1}),\quad
S\in{\rm GL}(m,\mathbb F),\ \
R\in{\rm GL}(n,\mathbb F), $$
(i.e., for representations of
the quiver \quivc\!\!\!) was
solved in \cite{pon, horn}: each
pair is uniquely, up to
permutation of cells
$J_r^{\mathbb F}(\lambda)$ in
$\oplus_i\Phi^{\mathbb
F}({\lambda_i})$, reduced to a
direct sum
\begin{multline}           \label{4.1}
(I, C)
\oplus\bigoplus_{j=1}^{t_1}
(I_{r_{1j}},J_{r_{1j}})
\oplus\bigoplus_{j=1}^{t_2}(J_{r_{2j}},I_{r_{2j}})\\
\oplus\bigoplus_{j=1}^{t_3}(F_{r_{3j}},G_{r_{3j}})
\oplus\bigoplus_{j=1}^{t_4}(G_{r_{4j}},F_{r_{4j}})
\end{multline}
(we use the notation
\eqref{3.1o} and put
$G_r:=K_r^T$), where $C$ is a nonsingular matrix of
the form \eqref{5.3''} and
$r_{i1}\ge r_{i2}\ge\dots\ge
r_{it_i}$.

\begin{theorem}   \label{t4.1}
One of the simplest miniversal
$\mathbb F$-deformations of the
canonical pair \eqref{4.1} under
contragredient equivalence over
$\mathbb F\in \{\mathbb C,
\mathbb R\}$ is the direct sum
of $(I, \tilde C)$ $(\tilde C$
is a simplest miniversal
$\mathbb F$-deformation of $C$
under similarity, see Theorem
\ref{t5.1}$)$ and {\em
$$\left(\left[
              \begin{tabular}{c|c|c}
                  $\oplus_j I_{r_{1j}}$&0&0\\       \hline
  0&$\oplus_j J_{r_{2j}}+\cal H$&$\cal H$\\       \hline
                  0&$\cal H$&
                              $\begin{matrix}
                   P_3&\cal H\\
                                 0&Q_4
                               \end{matrix}$
                 \end{tabular}\right],
          \left[\begin{tabular}{c|c|c}
  $\oplus_j J_{r_{1j}}+\cal H$&$\cal H$&$\cal H$\\    \hline
  $\cal H$&$\oplus_j I_{r_{2j}}$&0\\       \hline
  $\cal H$&0&$\begin{matrix}
                Q_3&0\\
                \cal H&P_4
              \end{matrix}$
                 \end{tabular}\right]
\right), $$} where $$
P_l=\left[\!\!\!\begin{tabular}{cccc}
$F_{r_{l1}}+H$&$H$&$\cdots$ &$H$
\\
 &$F_{r_{l2}}+H$&$ \ddots $&$\vdots$\\
    &&$\ddots $&$H$\\
         {\rm\rm\Large 0}&&& $F_{r_{lt_l}}+H$
         \end{tabular}\!\!\!\right],\
Q_l=\left[\!\!\!\begin{tabular}{cccc}
$G_{r_{l1}}$&&& {\rm\rm\Large 0} \\
      $H$&$ G_{r_{l2}}$&& \\
      $\vdots$&$\ddots$&$\ddots$&\\
      $H$&$\cdots$&$H$&$ G_{r_{lt_l}}$
         \end{tabular}\!\!\!\right]
$$ $ (l=3,\,4)$, $\cal H$ and
$H$ are matrices of the form
\eqref{5.0} and \eqref{5.1}, the
stars denote independent
parameters.
\end{theorem}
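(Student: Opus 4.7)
The plan is to apply Theorem \ref{t2.1}. We must show that for every $(M,N)$ of sizes matching the canonical pair $(A,B)$ of \eqref{4.1}, there exist square matrices $S,R$ with
$$(M,N)+(SA-AR,\,RB-BS)=(P,Q),$$
where $(P,Q)$ lies in the proposed parameter space and is uniquely determined by $(M,N)$. Partitioning $S,R,M,N$ conformally to the decomposition \eqref{4.1} reduces this to the blockwise requirement
$$(M_{ij},N_{ij})+(S_{ij}A_j-A_iR_{ij},\,R_{ij}B_j-B_iS_{ij})=(P_{ij},Q_{ij})$$
for every ordered pair of summands $({\cal P}_i,{\cal P}_j)$ of \eqref{4.1}.

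For the diagonal block of the regular summand $(I,C)$, the map $(S_{ii},R_{ii})\mapsto S_{ii}-R_{ii}$ is arbitrary, so after using it to kill $M_{ii}$ we are forced to set $R_{ii}=S_{ii}$, whereupon the $B$-component collapses to the similarity tangent $R_{ii}C-CR_{ii}$; Theorem \ref{t5.1} then yields the $(I,\tilde C)$ summand. The off-diagonal blocks between $(I,C)$ and any singular Kronecker summand reduce uniquely to $(0,0)$ by arguments parallel to Cases 2, 3, 7, 9 in the proof of Theorem \ref{t3.1}, explaining their absence from the statement. For the remaining interactions among the four Kronecker types $(I_r,J_r)$, $(J_r,I_r)$, $(F_r,G_r)$, $(G_r,F_r)$, we proceed case by case following the template of the proof of Theorem \ref{t3.1}: use part of $(S_{ij},R_{ij})$ to annihilate one of $M_{ij},N_{ij}$; solve for the resulting linear constraint on $(S_{ij},R_{ij})$; apply the residual freedom to reduce the other matrix to one of the normal forms ${\cal H}$, $H$, $0^{\uparrow}$, $0^{\downarrow}$, $0^{\leftarrow}$, $0^{\rightarrow}$, $Z$, $Z^T$; and verify uniqueness by matching the number of residual parameters against the number of stars. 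The shift identities listed just before the proof of Theorem \ref{t3.1} apply verbatim once one observes the contragredient swap: $S$ now multiplies $B$ from the right and $R$ from the left, the reverse of the simultaneous-equivalence setting, so some cases that were self-symmetric in Theorem \ref{t3.1} split into two here.

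I expect the main obstacle to be the asymmetric placement of $H$-blocks inside $P_l$ (strictly upper block-triangular) and $Q_l$ (strictly lower block-triangular). Concretely, for two same-type singular summands --- say two type-3 pairs $(F_{r_{3i}},G_{r_{3i}})$ and $(F_{r_{3j}},G_{r_{3j}})$ with $i<j$ and $r_{3i}\ge r_{3j}$ --- the reductions of $(M_{ij},N_{ij})$ and of $(M_{ji},N_{ji})$ must go in opposite directions: an $H$-block must be placed in the $A$-part of $(P_{ij},Q_{ij})$ but in the $B$-part of $(P_{ji},Q_{ji})$, with the corresponding partner entries set to $0$. Checking that this asymmetric choice uses exactly the residual freedom in $(S_{ij},R_{ij})$ is the key computation; it exploits the ordering $r_{l,1}\ge\cdots\ge r_{l,t_l}$ and runs parallel to Case 1 in the proof of Theorem \ref{t3.1}. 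The same asymmetry governs the off-diagonal block between a type-3 and a type-4 summand (producing the single ${\cal H}$ entries inside the lower-right $2\times 2$ block of $({\cal A},{\cal B})$), and analogous but simpler reductions handle the $\cal H$-entries linking the type-1/type-2 blocks.
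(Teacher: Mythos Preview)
Your plan matches the paper's proof in structure: invoke Theorem \ref{t2.1}, block-decompose conformally to \eqref{4.1}, and treat each ordered pair of summands separately. Your handling of the $(I,C)$ block and its vanishing cross-terms (since $C$ is nonsingular, so $\lambda\neq 0$ in Lemma \ref{l5.1}) is correct, and you have correctly identified the asymmetric placement of $H$-blocks in $P_l$ versus $Q_l$ as the point requiring care.

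However, your expectation about the residual normal forms is off, and this reflects a real gap in how you anticipate the computation will run. In the contragredient setting none of $0^{\uparrow}$, $0^{\downarrow}$, $0^{\leftarrow}$, $0^{\rightarrow}$, $Z$, $Z^T$ ever arise: every block reduces to $0$ or to an $H$-block. The reason is structural. After you kill one component, the surviving freedom on the other component is \emph{always} of Sylvester type $XJ_q - J_pX$ (possibly after stripping a row or column), so Lemma \ref{l5.1} does all the work. For instance, in the same-type case ${\cal P}_i=(F_p,G_p)$, ${\cal P}_j=(F_q,G_q)$ with $p\ge q$, the paper first kills $N_{ij}$, obtaining $R_{\lhd}=S^{\curlyvee}$; then $(\bigtriangleup M_{ij})^{\curlyvee}=RJ_{q-1}-J_{p-1}R$ while the first row of $\bigtriangleup M_{ij}$ is free, and Lemma \ref{l5.1} yields $M_{ij}=H$. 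This is \emph{not} parallel to Case 1 of Theorem \ref{t3.1} (which used the diagonal-sum $Z$ argument); the contragredient shift identities conspire so that the residual operator is always a nilpotent Sylvester commutator rather than a shift-difference. If you attempt the reduction expecting a $Z$-form you will not find the right complement. Once you recognise that Lemma \ref{l5.1} is the universal engine here, the ten cases go through exactly as in the paper.
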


\begin{proof}
Let $(A,B)$ be the canonical
matrix pair \eqref{4.1} and let $({\cal A},{\cal B})$ be its deformation from Theorem \ref{t4.1}. By
Theorem \ref{t2.1}, we must
prove that for every $M\in
{\mathbb F}^{\,m\times n},\ N\in
{\mathbb F}^{\,n\times m}$ there
exist $S\in {\mathbb
F}^{\,m\times m}$ and $R\in
{\mathbb F}^{\,n\times n}$ such
that
\begin{equation*}  
(M,N)+(SA-AR,\,RB-BS)=(P,Q),
\end{equation*}
or, in the block form,
\begin{equation*}
(M_{ij},N_{ij})+(S_{ij}A_j-A_iR_{ij},\,
R_{ij}B_j-B_iS_{ij})=(P_{ij},Q_{ij}),
\end{equation*}
 where $(P,Q)$ is obtained from $({\cal A},{\cal B})-(A,B)$ by replacing its stars with complex numbers and is uniquely determined by $(M,N)$.

Therefore, for every pair of
summands ${\cal P}_i
=(A_i,\,B_i)$ and ${\cal
P}_j=(A_j,\,B_j)$, $i\le j$,
from the decomposition
\eqref{4.1}, we must prove that

(a) the pair $(M_{ij},N_{ij})$
can be reduced to the pair
$(P_{ij},Q_{ij})$ by
transformations $(M_{ij},N_{ij})
\mapsto
(M_{ij},N_{ij})+(\bigtriangleup
M_{ij},\bigtriangleup N_{ij})$,
where $$ \bigtriangleup M_{ij}=
SA_j-A_iR,\quad \bigtriangleup
N_{ij}=RB_j-B_iS $$
 with arbitrary $R$ and $S$; moreover, $(P_{ij},Q_{ij})$ is uniquely determined (more exactly, its entries on the places of stars are uniquely determined) by $(M_{ij},N_{ij})$; and, if $i<j$,

(b) the pair $(M_{ji},N_{ji})$
can be reduced to the pair
$(P_{ji},Q_{ji})$ by
transformations $(M_{ji},N_{ji})
\mapsto
(M_{ji},N_{ji})+(\bigtriangleup
M_{ji},\bigtriangleup N_{ji})$,
where $$ \bigtriangleup M_{ji}=
SA_i-A_jR,\quad \bigtriangleup
N_{ji}=RB_i-B_jS $$
 with arbitrary $R$ and $S$; moreover, $(P_{ji},Q_{ji})$ is uniquely determined by $(M_{ji},N_{ji})$.

\begin{description}

\item[\it Case 1: ${\cal P}_i=(I,J_p^{\mathbb F} (\lambda))$ and ${\cal P}_j=(I,J_q^{\mathbb F} (\mu))$.]
${}$

$\qquad$ (a) We have
$\bigtriangleup M_{ij}= S- R$.
Make $M_{ij}=0$, then $S=R$;
$\bigtriangleup
N_{ij}=RJ_q^{\mathbb F}
(\mu)-J_p^{\mathbb
F}(\lambda)S$. Using Lemma
\ref{l5.1}, we make $N_{ij}=0$
if $\lambda\ne \mu$, and
$N_{ij}=H$ (see \eqref{5.1}) if
$\lambda= \mu$.

$\qquad$ (b) We have
$\bigtriangleup M_{ji}= S- R$
and $\bigtriangleup
N_{ji}=RJ_p^{\mathbb F}
(\lambda)-J_q^{\mathbb F}
(\mu)S$. As in Case 1(a), make
$M_{ji}=0$, then $N_{ji}=0$ if
$\lambda\ne \mu$ and $N_{ji}=H$
if $\lambda= \mu$.

\item[\it Case 2: ${\cal P}_i=(I,J_p^{\mathbb F} (\lambda))$ and ${\cal P}_j=(J_q,I)$.]
${}$

 $\qquad$ (a) We have $\bigtriangleup M_{ij}= SJ_q-R$.  Make $M_{ij}=0$, then $R=SJ_q$, $\bigtriangleup N_{ij}= R-J_p^{\mathbb F}(\lambda)S=SJ_q-J_p^{\mathbb F} (\lambda)S $.
Using Lemma \ref{l5.1}, we make
$N_{ij}=0$ if $\lambda\ne 0$ and
$N_{ij}=H$ if $\lambda=0$.

$\qquad$ (b) We have
$\bigtriangleup M_{ji}= S-J_qR$.
Make $M_{ji}=0$, then $S= J_qR$,
$\bigtriangleup N_{ji}=
RJ_p^{\mathbb F} (\lambda)-S=
RJ_p^{\mathbb F} (\lambda)-
J_qR$. We make $N_{ji}=0$ if
$\lambda\ne 0$ and $N_{ji}=H$ if
$\lambda=0$.

\item[\it Case 3: ${\cal P}_i=(I,J_p^{\mathbb F} (\lambda)) $ and ${\cal P}_j=(F_q,G_q)$.]
${}$

 $\qquad$ (a) We have $\bigtriangleup M_{ij}= SF_q-R=S^{\prec}-R$. Make $M_{ij}=0$, then $R=S^{\prec}$, $\bigtriangleup N_{ij}= RG_q-J_p^{\mathbb F} (\lambda)S= S^{\prec}_{\lhd}-J_p^{\mathbb F} (\lambda)S= SJ_q-J_p^{\mathbb F} (\lambda)S$.
Using Lemma \ref{l5.1}, we make
$N_{ij}=0$ if $\lambda\ne 0$ and
$N_{ij}=H$ if $\lambda=0$.

$\qquad$ (b) We have
$\bigtriangleup M_{ji}=
S-F_qR=S-R_{\triangledown}$.
Make $M_{ji}=0$, then
$S=R_{\triangledown}$,
$\bigtriangleup N_{ji}=
RJ_p^{\mathbb F} (\lambda)-G_qS=
RJ_p^{\mathbb F}
(\lambda)-R^{\curlyvee}_{\triangledown}=
RJ_p^{\mathbb F}
(\lambda)-J_{q-1}R$. We make
$N_{ji}=0$ if $\lambda\ne 0$ and
$N_{ji}=H$ if $\lambda=0$.

\item[\it Case 4: ${\cal P}_i=(I,J_p^{\mathbb F} (\lambda)) $ and ${\cal P}_j=(G_q,F_q)$.]
${}$

 $\qquad$ (a) We have $\bigtriangleup M_{ij}= SG_q-R$. Make $M_{ij}=0$, then $R=S_{\lhd}$,
$\bigtriangleup N_{ij}=
RF_q-J_p^{\mathbb F} (\lambda)S=
S^{\prec}_{\lhd}-J_p^{\mathbb
F}(\lambda)S=
SJ_{q-1}-J_p^{\mathbb
F}(\lambda)S $. Using Lemma
\ref{l5.1}, we make $N_{ij}=0$
if $\lambda\ne 0$ and $N_{ij}=H$
if $\lambda=0$.

$\qquad$ (b) We have
$\bigtriangleup M_{ji}=
S-G_qR=S-R^{\curlyvee}$. Make
$M_{ji}=0$, then
$S=R^{\curlyvee}$,
$\bigtriangleup N_{ji}=
RJ_p^{\mathbb F} (\lambda)-F_qS=
RJ_p^{\mathbb F}
(\lambda)-R^{\curlyvee}_{\triangledown}=
RJ_p^{\mathbb F}
(\lambda)-J_qR$. We make
$N_{ji}=0$ if $\lambda\ne 0$ and
$N_{ji}=H$ if $\lambda=0$.

\item[\it Case 5: ${\cal P}_i=(J_p,I)$ and ${\cal P}_j=(J_q,I)$.] ${}$ Interchanging the matrices in each pair, we reduce this case to Case 1.

\item[\it Case 6: ${\cal P}_i=(J_p,I) $ and ${\cal P}_j=(F_q,G_q)$.] Interchanging the matrices in each pair, we reduce this case to Case 4.

\item[\it Case 7: ${\cal P}_i=(J_p,I) $ and ${\cal P}_j=(G_q,F_q)$.] Interchanging the matrices in each pair, we reduce this case to Case 3.

\item[\it Case 8: ${\cal P}_i=(F_p,G_p) $ and ${\cal P}_j=(F_q,G_q),\ i\le j$ (and hence $p\ge q$).] ${}$

 $\qquad$ (a) We have $\bigtriangleup N_{ij}=
RG_q-G_pS=R_{\lhd}-S^{\curlyvee}$.
Make $N_{ij}=0$, then
$R_{\lhd}=S^{\curlyvee}$.
Further, $\bigtriangleup M_{ij}=
SF_q-F_pR=
S^{\prec}-R_{\triangledown}$, so
$(\bigtriangleup
M_{ij})^{\curlyvee}=(S^{\curlyvee})^{\prec}-
R^{\curlyvee}_{\triangledown}=R^{\prec}_{\lhd}-R^{\curlyvee}_{\triangledown}=RJ_{q-1}-J_{p-1}R$
and the first row of
$\bigtriangleup M_{ij}$ is
arbitrary (due to the first row
of $S$). We make the first row
of $M_{ij}$ equaling zero.
Following the proof of Lemma
\ref{l5.1} and taking into
account that $p\ge q$, we make
all entries of the $(p-1)\times
(q-1)$ matrix
$M_{ij}^{\curlyvee}$ equaling
zero except for the last row and
obtain $M_{ij}=H$.

$\qquad$ (b) We have $i<j$,
$\bigtriangleup
M_{ji}=SF_p-F_qR=
S^{\prec}-R_{\triangledown}$.
Make $M_{ji}=0$, then
$S^{\prec}= R_{\triangledown}$.
Further, $\bigtriangleup N_{ji}=
RG_p-G_qS=R_{\lhd}-S^{\curlyvee},\
(\bigtriangleup
N_{ji})^{\prec}=R^{\prec}_{\lhd}-R^{\curlyvee}_{\triangledown}=RJ_{p-1}-J_{q-1}R$
and the last column of
$\bigtriangleup N_{ji}$ is
arbitrary (due to the last
column of $S$).
 We make the last column of $\bigtriangleup N_{ji}$ equaling zero. By Lemma \ref{l5.1} and the inequality $p\ge q$, we make all entries of the $(q-1)\times (p-1)$ matrix $N_{ji}^{\prec}$ equaling zero except for the first column and obtain $N_{ji}=H$.

\item[\it Case 9: ${\cal P}_i=(F_p,G_p) $ and ${\cal P}_j=(G_q,F_q)$.] ${}$

 $\qquad$ (a) We have $\bigtriangleup N_{ij}=
RF_q-G_pS=R^{\prec}-S^{\curlyvee}$.
Make $N_{ij}=0$, then
$R^{\prec}=S^{\curlyvee}$, i.e.
$R=X^{\curlyvee}$ and
$S=X^{\prec}$ for an arbitrary
$X$. Further, $\bigtriangleup
M_{ij}=
SG_q-F_pR=S_{\lhd}-R_{\triangledown}=X^{\prec}_{\lhd}-X^{\curlyvee}_{\triangledown}$,
we make $M_{ij}=H$.

$\qquad$ (b) We have
$\bigtriangleup M_{ji}=
SF_p-G_qR$ and $\bigtriangleup
N_{ji}= RG_p-F_qS.$ So we
analogously make $M_{ji}=0$ and
$N_{ji}=H$.

\item[\it Case 10: ${\cal P}_i=(G_p,F_p) $ and ${\cal P}_j=(G_q,F_q),\ i\le j$.] ${}$ Interchanging the matrices in each pair, we reduce this case to Case 8.
\end{description}
\end{proof}


\begin{thebibliography}{99}

\bibitem{arn}

  V. I. Arnold, On matrices depending on parameters, {\it Russian Math. Surveys} 26 (no. 2) (1971) 29-43.

\bibitem{pon}
N. M. Dobrovol'skaya and V. A.
Ponomarev, A pair of counter
operators, {\it Uspehi Mat.
Nauk} 20 (no. 6) (1965) 80-86.

\bibitem{horn}
 R. A. Horn, D. I. Merino, Contragredient equivalence: a canonical form and some applications {\it Linear Algebra Appl.}  214 (1995) 43-92.

\bibitem{arn3}
V. I. Arnold, {\it Geometrical
methods in the theory of
ordinary differential
equations}. Springer-Verlag, New
York, 1988.

\bibitem{gal}
D. M. Galin, On real matrices
depending on parameters, {\it
Uspehi Mat. Nauk} 27 (no. 1)
(1972) 241-242.

\bibitem{kag}

A. Edelman, E. Elmroth, B.
K\r{a}gstr\"{o}m, A geometric
approach to perturbation theory
of matrices and matrix pencils.
Part I: Versal deformations,
{\it Siam J. Matrix Anal. Appl.}
18 (no. 3) (1997) 653-692.

\bibitem{berg}
J. Berg, H. Kwantny, A canonical
parametrization of the Kronecker
form of a matrix pencil, {\it
Automatica} 31 (1995) 669-680.

\bibitem{gar}
I. Garc\'\i a-Planas,  D.
Magret, Deformation and
stability of triples of
matrices, {\it Linear Algebra
Appl.} 254 (1997) 159-192.

\bibitem{arn2}
V. I. Arnold, {\it Dynamical
systems VI} (Encyclopaedia Math.
Sciences 6), Springer, 1990.

\bibitem{gar1}
J. Ferrer, M$^{\mbox{\b{a}}}$ I.
Garc\'\i a,  F. Puerta,
Brunowsky local form of a
holomorphic family of pairs of
matrices, {\it Linear Algebra
Appl.} 253 (1997) 175-198.

\bibitem{gan}
F. R. Gantmacher, {\it The
theory of matrices}, Vol. 1,
Chelsea, New York, 1959.

\end{thebibliography}
\end{document}